\let\csname equation*\endcsname\relax
\let\csname endequation*\endcsname\relax
\journal{Stochastics and Dynamics}
\begin{document}
\newtheorem{definition}{Definition}[section]
\newtheorem{lemma}{Lemma}[section]
\newtheorem{remark}{Remark}[section]
\newtheorem{theorem}{Theorem}[section]
\newtheorem{proposition}{Proposition}
\newtheorem{assumption}{Assumption}
\newtheorem{example}{Example}
\newtheorem{corollary}{Corollary}[section]
\def\ep{\varepsilon}
\def\Rn{\mathbb{R}^{n}}
\def\Rm{\mathbb{R}^{m}}
\def\E{\mathbb{E}}
\def\hte{\hat\theta}
\renewcommand{\theequation}{\thesection.\arabic{equation}}
\begin{frontmatter}



\title{Stochastic turbulence for Burgers equation driven by
cylindrical L\'evy process}

\author{Shenglan Yuan\fnref{addr1,addr2}}\ead{shenglan.yuan@math.uni-augsburg.de}
\author{Dirk Bl$\rm\ddot{o}$mker\fnref{addr1}}\ead{dirk.bloemker@math.uni-augsburg.de}
\author{Jinqiao Duan\fnref{addr2,addr3}}\ead{duan@iit.edu}

\address[addr1]{\rm Institut f$\rm\ddot{u}$r Mathematik, Universit$\rm\ddot{a}$t Augsburg,
86135, Augsburg, Germany }
\address[addr2]{\rm Center for Mathematical Sciences, Huazhong
University of Science and Technology, 430074, Wuhan, China}
\address[addr3]{\rm Department of Applied Mathematics, Illinois Institute of Technology, Chicago, Illinois 60616, USA}

\begin{center}
{\small \textbf{Dedicated to Bj\"{o}rn Schmalfuss on the occasion of his 65th birthday}}
\end{center}

\begin{abstract}
This work is devoted  to investigating stochastic turbulence for the fluid flow in one-dimensional viscous Burgers equation perturbed by L\'evy space-time white noise with the periodic boundary condition. We rigorously discuss the regularity of solutions and their statistical quantities in this stochastic dynamical system.  The quantities  are such as the moment estimate, the structure function and the energy spectrum of the turbulent velocity field. Furthermore, we provide qualitative and quantitative properties of  the stochastic Burgers equation when the kinematic viscosity $\nu$ tends towards zero. The inviscid limit describes the strong stochastic turbulence.
\end{abstract}

\begin{keyword}
Stochastic turbulence, Burgers equation, L\'evy noise, fluid flow, kinematic viscosity.\\
\noindent\emph{\textbf{2020 MSC}}: 76F20, 76F55, 60H15.



\end{keyword}

\end{frontmatter}


\section{Introduction}
Burgers equation is a dissipative system occurring in various areas of applied mathematics, such as fluid mechanics \cite{BRS,G17}, nonlinear acoustics \cite{GBPC}, gas dynamics \cite{M10}, and traffic flow \cite{W05}. For a given kinematic viscosity $\nu$,  the viscous Burgers equation is
\begin{equation*}
\partial_tu=\nu\partial_{xx}u-u\partial_xu,\quad u(0,x)=u_{0}(x),
\end{equation*}
which describes the speed of the fluid at each location along the pipe as time progresses \cite{GS}. The solution is then called Burgers turbulent fluid flow, and referred to as ``Burgers turbulence" or ``Burgulence'' \cite{FB}. We refer to it for the case of a fluid flow $u(t,x)$ of order one, space-periodic of
period one.

By means of the celebrated Cole-Hopf transformation $u=-2\nu\frac{1}{\phi}\partial_x\phi$,
we convert the viscous Burgers equation into a linear equation
\begin{equation*}
\partial_x\Big(\frac{1}{\phi}\partial_t\phi\Big)=\nu\partial_x(\frac{1}{\phi}\partial_{xx}\phi),
\end{equation*}
which can be integrated with respect to $x$ on a periodic domain to get the diffusion equation $\partial_t\phi=\nu\partial_{xx}\phi$.
The heat equation can be solved explicitly by the convolution of the initial data and the heat kernel, and then we use the inverted Cole-Hopf transformation to obtain the solution to the viscous Burgers equation:
\begin{equation*}
 u(t,x)=-2\nu\partial_x\left(\ln \left\{(4\pi \nu t)^{-1/2}\int _{-\infty }^{\infty }\exp \left[-{\frac {(x-\hat{x})^{2}}{4\nu t}}-{\frac {1}{2\nu }}\int _{0}^{\hat{x}}f(\tilde{x})d\tilde{x}\right]d\hat{x}\right\}\right).
\end{equation*}

The Reynolds number of the flow $u(t,x)$ is $\text{\emph{\textbf{Rey}}}= \nu^{-1}$,
where $\nu$ is the kinematic viscosity of fluid. If \emph{\textbf{Rey}} is large (i.e., $\nu\ll1$), then the velocity field $u(t,x)$  of the flow becomes
very irregular, i.e., turbulent \cite{LP}. 

In particular, when $\nu=0$, the viscous Burgers equation becomes the inviscid Burgers equation $\partial_tu=-u\partial_xu,$
which is a conservation law, more generally a first order quasilinear hyperbolic equation. It is important to stress that $u(t,x)$
converges as $\nu\rightarrow0$ to a weak solution for the inviscid Burgers equation constructed by the method of characteristics; in general the weak solution is not continuous  and its discontinuties are indeed very useful to express the turbulence due to an anomaly from the shock wave.

Taking the Burgers equation as a model for one-dimensional turbulence we follow Burgers\cite{Burg}.
In order to characterize  Burgers turbulence in the presence of random forces, we add  L\'evy noise $\eta$ to generate a stochastic Burgers equation. The main focus of this study is to investigate the qualitative properties for stochastic viscous Burgers equation driven by
cylindrical L\'evy process with the periodic boundary condition, bearing in mind,
\begin{equation}\label{B}
\partial_tu=\nu\partial_{xx}u-u\partial_xu+\eta(t,x),\quad u(0,x)=u_{0}(x), t\geq0, x\in \mathbb{S}^{1}, 0<\nu\leq1.
\end{equation}

The Kolmogorov theory of turbulence was created by A. N. Kolmogorov in three articles \cite{K41}-\cite{K41b} published
in 1941 (Kolmogorov's 1941 theory is referred to herein as K41 theory). It
describes statistical properties of turbulent flows and is now the most popular theory of
turbulence, known as the local similarity or universal equilibrium theory of small-scale velocity fluctuations in high Reynolds
number, incompressible and stationary turbulence. Kolmogorov proved that short-scale-in-$x$ features of a turbulent flow $u(t,x)$ display a universal
behaviour which depends on particularities of the system only through a few parameters. The Kolmogorov theory is statistical. That is, it assumes that the velocity field $u(t,x)$ depends on a random parameter $\omega\in(\Omega,\mathcal{F},\mathbb{P})$.  Moreover, Kolmogorov supposed that the random
field $u^{\omega}(t,x)$ is stationary in time and homogeneous.  It means that statistically $u^{\omega}(t,x)$ and $u^{\omega}(t+C,x+\tilde{C})$, where $C$ and $\tilde{C}$ are constants, are the same random field. Kolmogorov studied short space-increments
\begin{equation*}
u(t,x+l)-u(t,x),\quad|l|\ll1,
\end{equation*}
and examined moments of these random variables as functions of $|l|$. He also took
the Fourier coefficients $\hat{u}_k(t)$ of $u(t,x)$ and identified their second moments as functions of $|k|$. The Kolmogorov theory admits a natural one-dimensional version \cite{K62}.

In the spirit of K41 theory, Kuksin \cite{K97}-\cite{K67} examined the  properties of the turbulence limit for some nonlinear partial differential equations perturbed by a bounded random kick-force as the viscosity goes to zero. Boritchev \cite{AB14} obtained sharp estimates for the dissipation length scale and the
small-scale quantities which characterise the decaying Burgers turbulence, i.e., the
structure functions and the energy spectrum.  The references \cite{BK,DH19} focused on the turbulence in stochastic viscous Burgers equation with forcing of Gaussian white noise type. The paper \cite{DK} discussed a number of rigorous results in the stochastic model for wave turbulence due to Zakharov-L'vov, and considered the damped/driven (modified) cubic nonlinear Schr\"{o}dinger equation on a large torus and decomposed its solutions to formal series in the amplitude.
E, Khanin, Masel and Sinai \cite{E20} established existence and uniqueness of an invariant measure for the
Markov process corresponding to the inviscid stochastic Burgers equation, and also gave a detailed description
of the structure and regularity properties for the solutions that live on the support of this measure.

Experimentally, Gottwald \cite{G07} evaluated centred second order in time and space discretizations of the inviscid
Burgers equation that supports non-smooth shock wave solutions in its continuum formulation, and derived the
modified equation associated with the numerical scheme using backward error analysis. LaBryer, Attar and Vedula \cite{LAV} improved
the reliability and computational efficiency of large-eddy simulation predictions for turbulent flows of the forced Burgers equation, and  constructed subgrid models based upon information that is consistent with the underlying spatiotemporal statistics of the flow. Bl\"{o}mker, Kamrani and Hosseini \cite{BKH} investigated the spectral Galerkin method for spatial discretization and the rate
of convergence in uniform topology for the stochastic Burgers equation driven by colored noise with well-illustrated numerical examples.
A related work for the numerical solution of the one-dimensional Burgers equation with Neumann boundary noise using the Galerkin
approximation in space and the exponential Euler method in time was referred to Ghayebi et al., \cite{GHB}.

Most of our results rely on some knowledge for cylindrical
L\'evy process, e.g. It\^{o}'s formula. The notion cylindrical L\'evy process appeared the first
time in Peszat and Zabczyk \cite{PZ}. The systematic
introduction of cylindrical L\'evy processes was presented Applebaum
and Riedle \cite{AR10}. The independent and stationary increments of a cylindrical L\'evy process enable us
to identify the cylindrical L\'evy process as a good integrator. Bertoin \cite{B01} established some properties and elements of Burgers turbulence with white noise initial data, and discussed their possible generalization to the stable noise case. Truman and Wu \cite{TW03} proved existence of a unique, local and mild solution to the stochastic Burgers equation driven by L\'evy space-time white noise. Dong, Xu and Zhang \cite{DXZ} identified the strong Feller property and the exponential ergodicity of stochastic Burgers equations driven by $\alpha$-stable processes.

This work is structured as follows. In Section \ref{AA}, we introduce analysis tools including Sobolev space, mass conserving noise and L\'evy
processes with bounded jumps that have moments of all orders. We regard the solution $u(t,\omega,x)$ of \eqref{B} as a turbulent curve depending on $\omega$ in $L^{2}(\mathbb{S}^{1})$, i.e., as a random processes in Sobolev spaces. In Section \ref{Well}, we present the properties of existence
and uniqueness for the pathwise solution $u$ of model \eqref{B}.  In Section \ref{OK}, we discuss the Oleinik-Kruzkov inequality about estimates on $u$ in $L^{\infty}$, and on $\partial_{x}u$ in $L^{1}$. In Section \ref{ME}, we systematically average various functionals $f$ of solution $u$ to estimate quantities
\begin{equation*}
\mathbb{E}f(u(t))=\int_{\Omega}f(u(t,\omega))\mathbb{P}(\omega),\quad f: H^{n}\rightarrow\mathbb{R},
\end{equation*}
by means of the logic in the turbulence theory. Then, we get upper and lower bounds for the
Sobolev norms of solution in stochastic Burgers equation \eqref{B}.
In Section \ref{ST}, we deduce the main results consisting of
the structure function in $x$ and the energy spectrum in Fourier for the fluid flow $u$, and justify the non-Gaussian behavior of stochastic turbulence by considering small-scale quantities. In Section \ref{SI}, we show that the energy spectrum of solution $u$ in dynamical system \eqref{B}
is of the form of Kolmogorov's law $E_{n}(u)\sim n^{-2}$, $n\in\mathbb{N}^{\ast}$ when $\nu\rightarrow0$ based on the results of the previous sections. Finally, in Section \ref{five}, we summarize our conclusions and challenges, as well as a number
of directions for future study.

\section{Analysis tools}\label{AA}
Throughout the paper, we shall work in the separable real Hilbert space
\begin{equation*}
H=\{\phi\in L^{2}(\mathbb{S}^{1}):\int_{\mathbb{S}^{1}}\phi(x)dx=0\},
\end{equation*}
and equip it with scalar product $\langle\cdot,\cdot\rangle$ and norm $\|\cdot\|$ defined by
\begin{equation*}
\langle\phi,\tilde{\phi}\rangle:=\int_{\mathbb{S}^{1}}\phi(x)\tilde{\phi}(x)dx,\quad\|\phi\|:=\Big(\int_{\mathbb{S}^{1}}\phi^{2}(x)dx\Big)^{\frac{1}{2}},\quad\phi,\tilde{\phi}\in H.
\end{equation*}
Note that $\mathbb{S}^{1}$ is identified with the interval $[0,2\pi)$ and periodic boundary condition. The standard scalar product
in $L^2(\mathbb{S}^{1})$ is denoted by $\langle\cdot,\cdot\rangle_{L^2}$. For the Laplace operator $\Delta$ on $H$ with $D(\Delta)=W^{k,2}(\mathbb{S}^{1})\cap H$. The eigenfunctions of $\Delta$ are
\begin{eqnarray}\label{try}
e_{k}(x)=\left\{\begin{array}{l}
\sqrt{2}\cos(2\pi k x),\quad\text{if}~~k\in\{1,2,\cdot\cdot\cdot\};\\
\sqrt{2}\sin(2\pi k x),\quad\text{if}~~k\in\{-1,-2,\cdot\cdot\cdot\}.
\end{array}
\right.
\end{eqnarray}
Those functions $\{e_{k},k=\pm1,\pm2,\cdot\cdot\cdot\}$ form the trigonometric basis  in  $H$ of periodic function
with zero mean. Note that if $u(t,x)=\sum_{k\in\mathbb{Z}^{\ast}}u_{k}(t)e_{k}(x)$, then by $e^{2i\pi kx}=\cos(2\pi k x)+i\sin(2\pi k x)$, we write it as Fourier series
\begin{equation*}
u(t,x)=\sum_{k\in\mathbb{Z}^{\ast}}\hat{u}_{k}(t)e^{2i\pi kx},
\end{equation*}
where $\hat{u}_{k}(t)=\bar{\hat{u}}_{-k}(t)=(\sqrt{2})^{-1}(u_{k}(t)-iu_{-k}(t))$ is called Fourier coefficient.

For all $k\in\mathbb{Z}^{\ast}:=\mathbb{Z}\backslash\{0\}$,
\begin{equation*}
\Delta e_{k}=-\lambda_{k}e_{k}\quad\text{with}\quad\lambda_{k}=4\pi^{2}|k|^{2}.
\end{equation*}
For any $\theta\in \mathbb{R}$, by using the domain of definition for fractional powers of the operator $\Delta$, we define
\begin{equation*}
H^{\theta}:=D((-\Delta)^{\theta/2}):=\Big\{\phi=\sum_{k\in\mathbb{Z}^{\ast}}\phi_{k}e_{k}: \phi_k=\langle \phi,e_k\rangle\in \mathbb{R}, \sum_{k\in\mathbb{Z}^{\ast}}\lambda_{k}^{\theta}\phi_{k}^{2}<\infty\Big\},
\end{equation*}
and
\begin{equation*}
(-\Delta)^{\theta/2}\phi:=\sum_{k\in\mathbb{Z}^{\ast}}\lambda_{k}^{\theta/2}\phi_ke_k,~~~\phi\in D((-\Delta)^{\theta/2}),
\end{equation*}
with the associated norm
\begin{equation*}
\parallel \phi\parallel_{\theta}=\parallel \sum_{k=1}^{\infty}\phi_ke_k\parallel_{\theta}:=\parallel(-\Delta)^{\theta/2}\phi\parallel:=\sqrt{\sum_{k\in\mathbb{Z}^{\ast}}\lambda_{k}^{\theta}\phi_{k}^{2}} .
\end{equation*}
It is worthwhile to note that when $\theta=n\in\mathbb{N}^{*}$, $H^n$ is equivalent to the Sobolev space
$H^n=\{u\in H,\, u^{(n)}\in H\}$,
endowed with the homogeneous scalar product
\begin{equation*}
\langle u,v\rangle_{n}=\int_{\mathbb{S}^{1}}u^{(n)}(x)v^{(n)}(x)dx,
\end{equation*}
where $u^{(n)}:=\partial_{x}^{n}u$ represents the weak derivative w.r.t. $x$ of order $n$ for $u$. The norm $\|\cdot\|_{n}$ is induced by the product, and $\partial_{x}:H^{n+1}\rightarrow H^n$ is an isomorphism satisfying $\|\partial_{x}^{m}u\|_{n}=\|\partial_{x}u\|_{m+n}$, for all $m,n\in\mathbb{N}$.

If $u\in H$, then $u$ is written as $u(x)=\sum_{k\in\mathbb{Z}^{\ast}}u_{k}e_{k}(x)$ using the trigonometric base \eqref{try}, and the norm
of $u$ in $H^n$ is
\begin{equation*}
\|u\|_{n}^{2}=(2\pi)^{2n}\sum_{k\in\mathbb{Z}^{\ast}}|k|^{2n}|u_{k}|^{2}=2(2\pi)^{2n}\sum_{k=1}^{\infty}|k|^{2n}|\hat{u}_{k}|^{2}.
\end{equation*}
Alternatively, we use this characterization to determine the Sobolev space $H^n$ for all $n\geq0$:
$H^n=\{u\in H, \|u\|_{n}<\infty\}$. For $n<0$, we define $H^n$ as the completion of $H$ with respect to the norm $\|\cdot\|_{n}$.

We recall the Sobolev injection:
\begin{equation}\label{So}
H^n\hookrightarrow C^{k}(\mathbb{S}^{1})\Longleftrightarrow n>k+\frac{1}{2},
\end{equation}
and that the space $H^n$ for $n>\frac{1}{2}$ is a Hilbertian algebra:
\begin{equation}\label{uvn}
\|uv\|_{n}\leq c_{n}\|u\|_{n}\|v\|_{n}, \quad\quad\quad\text{for some constant}~~c_n.
\end{equation}

Let 
$p\in[1,+\infty)$ 
. For a function $u(x)$, we denote by $|u|_{p}$ its norm in the Lebesgue space $L^{p}(\mathbb{S}^{1})$, and by $\|u\|_{n}$ its homogeneous Sobolev norm of order $n$. If $\theta=0$, then we write $\|u\|:=\|u\|_{0}=|u|_{2}$.

In order to guarantee mass conservation $\int_{\mathbb{S}^{1}}u(t,x)dx=0$ of the solution to \eqref{B}, we suppose that the initial value and the noise are mass conserving \cite{BKW}, i.e., for all $t\geq0$, $\int_{\mathbb{S}^{1}}u_{0}dx=\int_{\mathbb{S}^{1}}\eta dx=0$.
The stochastic term $\eta=\partial_tL(t,x)$ is L\'evy space-time white noise \cite{NOP}, where $L$ is a cylindrical L\'evy process defined via
\begin{equation}\label{decom}
L(t)=\sum_{k=\pm1,\pm2,\cdot\cdot\cdot}\beta_{k}L_{k}(t)e_{k}(x).~~~~~~
\end{equation}
Here $\{\beta_{k}\}$ is a given sequence of real numbers, converging to zero sufficiently fast, i.e., there exist constants $C_{1}, C_{2}>0$ such that, for any $k\in\mathbb{Z}^{\ast}$,
\begin{equation*}
C_1\lambda_{k}^{-\gamma_0}\leq\beta_{k}\leq C_2\lambda_{k}^{-\gamma_0},~~~~\text{with}~~\gamma_0>1.
\end{equation*}
The independent one-dimensional L\'evy processes $\{L_{k}(t)\}_{k\in\mathbb{Z}^{\ast}}$ on a standard probability space $(\Omega,\mathcal{F},\{\mathcal{F}_t\}_{t\geq0},\mathbb{P})$, 
 have the same characteristic function by L\'evy-Khinchine formula,  satisfying that for any $k\in\mathbb{Z}^{\ast}$ and $t\geq0$,
\begin{equation*}
\mathbb{E}[e^{i\xi L_{k}(t)}]=[\mathbb{E}e^{i\xi\cdot L_{k}(1)}]^{t}=e^{t\psi(\xi)},\,\,\,\xi\in \mathbb{R},
\end{equation*}
where $\psi(\xi)$ is the L\'evy symbol given by
$$\psi(\xi)=ib\xi-\frac{1}{2}\sigma^{2}\xi^{2}+\int_{y\neq0}\big(e^{i\xi y}-1-i\xi y\textbf{1}_{(0,1)}(|y|)\big)\mu(dy),\quad b,\sigma \in\mathbb{R}.$$
Note that $\mu$ in the triplet $(b,\sigma^{2},\mu)$ is the L\'evy measure satisfying $\int_{{R}\setminus{\{0\}}}1\wedge|y|^{2}\mu(dy)<\infty$. For $t>0$ and $B\in\mathcal{B}({\mathbb{R}}\setminus{\{0\}})$, define the Poisson random measure of $L_{k}(t)$ by
 \begin{equation*}
 N_{k}(t,B)=\sum_{0\leq s\leq t}\textbf{1}_{B}(\Delta L_{k}(s))=\#\{0\leq s\leq t: \Delta L_{k}(s)\in B\},
 \end{equation*}
where $\Delta L_{k}(s):=L_{k}(s)-L_{k}(s-)$ are  the jumps of the process $L_{k}$, and $L_{k}(s-)$ is the left limit of $L_{k}(s)$ at time $s$. The function $\mu(B)=\mathbb{E}(N(1,B))$ describes the expected number of jumps in a certain size at a time interval $(0,1]$ in particular. Furthermore, define the  compensated Poisson measure of $L_{k}(t)$ via $\tilde{N}_k(t,B)=N_k(t,B)-t\mu(B)$.
According to the L\'evy-It\^o decomposition, $L_k(t)$ can be expressed as \cite{DH17}
\begin{equation}\label{Ito-Dec}
 L_k(t)=tb+\sigma W_{t}+\int_{|y|<1}y\tilde{N}_k(t,dy)+\int_{|y|\geq 1}yN_k(t,dy),
\end{equation}
where $W$ is a one-dimensional standard Brownian motion.
In this work we only consider L\'evy processes $\{L_{k}(t)\}_{k\in\mathbb{Z}^{\ast}}$ with uniformly bounded jumps and L\'evy triplet $(0,0,\mu)$. They admit moments of all orders.

\begin{theorem}
If $\{L_{k}(t)\}_{k\in\mathbb{Z}^{\ast}}$ are L\'evy processes such that $|\Delta L_k(t,\omega)|\leq c$ for all $t\geq0$ and some constant
$c>0$, then $\mathbb{E}(|L_k(t)|^{p})<\infty$ for all $p\geq0$.
\end{theorem}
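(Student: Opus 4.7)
The plan is to exploit the L\'evy-It\^o decomposition \eqref{Ito-Dec} with triplet $(0,0,\mu)$, so that
$$L_k(t)=M_k(t)+J_k(t),\quad M_k(t):=\int_{|y|<1}y\,\tilde N_k(t,dy),\quad J_k(t):=\int_{|y|\geq 1}y\,N_k(t,dy),$$
and bound each summand separately. The hypothesis $|\Delta L_k(t,\omega)|\leq c$ immediately forces the L\'evy measure $\mu$ to be supported in $\{|y|\leq c\}$; in particular $\mu(\{|y|\geq 1\})\leq \mu(\{1\leq|y|\leq c\})<\infty$, so large jumps occur with finite rate.

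For the large-jump part $J_k(t)$, finiteness of $\mu$ on $\{|y|\geq 1\}$ makes it a compound Poisson process $J_k(t)=\sum_{i=1}^{\mathcal N_k(t)}Y_i$ with $\mathcal N_k(t)$ Poisson of parameter $t\mu(\{|y|\geq 1\})$ and i.i.d.\ jumps $Y_i$ bounded by $c$. Hence $|J_k(t)|\leq c\,\mathcal N_k(t)$, and since Poisson variables have moments of all orders, so does $J_k(t)$.

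For the compensated small-jump part $M_k(t)$, I would compute its moment generating function via the L\'evy--Khinchine formula applied to the pure-jump martingale component:
$$\mathbb{E}\big[e^{\lambda M_k(t)}\big]=\exp\Big(t\int_{|y|<1}\big(e^{\lambda y}-1-\lambda y\big)\mu(dy)\Big),\quad \lambda\in\mathbb{R}.$$
Using the elementary estimate $|e^{\lambda y}-1-\lambda y|\leq \tfrac{1}{2}\lambda^{2}y^{2}e^{|\lambda|}$ valid for $|y|<1$ together with $\int_{|y|<1}y^{2}\mu(dy)<\infty$, the right-hand integral is finite for every $\lambda\in\mathbb{R}$. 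Therefore $M_k(t)$ admits exponential moments of all orders, and in particular $\mathbb{E}|M_k(t)|^{p}<\infty$ for every $p\geq 0$.

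Combining the two bounds via Minkowski's inequality (and Jensen's inequality to cover $0\leq p<1$) yields $\mathbb{E}|L_k(t)|^{p}<\infty$ for all $p\geq 0$. The only delicate point is Step~2, the control of the compensated small jumps; the exponential-moment route above is the cleanest, but an equally valid alternative would be an induction on $p$ based on the Burkholder--Davis--Gundy inequality applied to $M_k$, exploiting that its quadratic variation $[M_k,M_k]_t=\int_{0}^{t}\!\int_{|y|<1}y^{2}N_k(ds,dy)$ has jumps bounded by $1$ and compensator $t\int_{|y|<1}y^{2}\mu(dy)<\infty$, hence admits all moments.
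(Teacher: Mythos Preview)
Your proof is correct but follows a genuinely different route from the paper. The paper does not invoke the L\'evy--It\^o decomposition at all; instead it defines the stopping times $\tau_0=0$, $\tau_n=\inf\{t>\tau_{n-1}:|L_k(t)-L_k(\tau_{n-1})|\geq c\}$, uses the strong Markov property to get $\mathbb{E}e^{-\tau_n}=q^n$ for some $q\in[0,1)$, observes that $|L_k(t\wedge\tau_n)|\leq 2nc$ so that $\{|L_k(t)|>2nc\}\subset\{\tau_n<t\}$, and then bounds the tail via $\mathbb{P}(|L_k(t)|>2nc)\leq e^t\mathbb{E}e^{-\tau_n}=e^tq^n$, from which all moments follow by slicing. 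Your approach, by contrast, splits off the compound Poisson large-jump part (controlled by a Poisson count) and handles the compensated small-jump martingale through its moment generating function via L\'evy--Khinchine. What the paper's argument buys is self-containment---it uses only the strong Markov property and avoids quoting the analytic continuation of the characteristic exponent to real arguments---and it applies verbatim to any L\'evy process with bounded jumps regardless of drift or Gaussian part. What your argument buys is a strictly stronger conclusion: you actually establish finite exponential moments $\mathbb{E}e^{\lambda|L_k(t)|}<\infty$ for all $\lambda$, not just polynomial ones, and the structural decomposition makes the source of each bound transparent. Both are standard; your reliance on the pure-jump triplet $(0,0,\mu)$ is legitimate here since the paper declares that restriction immediately before the theorem.
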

\begin{proof}
Let $\mathcal{F}_{t}^{L_k}:=\sigma(L_{k}(s),s\leq t)$ be the $\sigma$-algebra generated by $L_k$, and define the stopping times
\begin{equation*}
\tau_{0}:=0,\quad\tau_{n}:=\inf\{t>\tau_{n-1}:|L_{k}(t)-L_{k}(\tau_{n-1})|\geq c\}.
\end{equation*}
Since $\{L_{k}(t)\}_{k\in\mathbb{Z}^{\ast}}$ have c\`{a}dl\`{a}g paths, $\tau_{0}<\tau_{1}<\tau_{2}<\cdot\cdot\cdot$. Moreover, L\'evy processes enjoy the strong Markov property
$\tau_{n}-\tau_{n-1}\sim\tau_{1}$ and $\tau_{n}-\tau_{n-1}$ are stochastically independent of $\mathcal{F}_{\tau_{n-1}}^{L_k}$, i.e., $(\tau_{n}-\tau_{n-1})_{n\in\mathbb{N}}$ is an independent and identically distributed sequence. Therefore,
\begin{equation*}
\mathbb{E}e^{-\tau_{n}}=(\mathbb{E}e^{-\tau_{1}})^{n}=q^{n},
\end{equation*}
for some $q\in[0,1)$.  From the definition of the stopping times we infer
\begin{equation*}
|L_k(t\wedge\tau_{n})|\leq\sum_{m=1}^{n}|L_{k}(\tau_{m})-L_{k}(\tau_{m-1})|\leq\sum_{m=1}^{n}\big(\underset{\leq c}{\underbrace{|\Delta L_{k}(\tau_{m})|}}+\underset{\leq c}{\underbrace{|L_{k}(\tau_{m}-)-L_{k}(\tau_{m-1})|}}\big)\leq2nc.
\end{equation*}
Thus, $|L_{k}(t)|>2nc\Longrightarrow\tau_{n}<t$, and by Markov's inequality
\begin{equation*}
\mathbb{P}(|L_{k}(t)|>2nc)\leq\mathbb{P}(\tau_{n}<t)\leq e^{t}\mathbb{E}e^{-\tau_{n}}=e^{t}q^{n}.
\end{equation*}
Finally,
\begin{align*}
\mathbb{E}(|L_{k}(t)|^{p})&=\sum_{n=0}^{\infty}\mathbb{E}(|L_{k}(t)|^{p}\textbf{1}_{\{2nc<|L_{k}(t)|\leq2(n+1)c\}})\\
                       &\leq(2c)^{p}\sum_{n=0}^{\infty}(n+1)^{p}\mathbb{P}(|L_{k}(t)|>2nc)\leq(2c)^{p}e^{t}\sum_{n=0}^{\infty}(n+1)^{p}q^{n}<\infty.
\end{align*}
\end{proof}
\begin{remark}
A L\'evy process $L_{k}$ starting at zero, has stationary and independent
increments and is continuous in probability. All paths $[0,\infty)\ni t\mapsto L_{k}(t,\omega)$ are right-continuous with finite left-hand limits (c\'adl\'ag). For each $\omega$ the noise $L(t,\omega,x)$ in \eqref{B} defines a c\'adl\'ag curve $L(t,\cdot)\in\mathcal{D}^{0}(\mathbb{R}^{+},\mathcal{H}^{\theta}),\theta\geq0.$
\end{remark}
\begin{example}
For example, $L_{k}(t)$ is a L\'evy process with L\'evy measure $\mu(dy)=\frac{\tau(y)}{|y|^{1+\alpha}}dy$, $\alpha\in(1,2)$,
where $\tau(y)$ is smooth truncation function $\tau(y)=1$ for $|y|\leq1$, $\tau(y)=0$ for $|y|\geq2$. We suppress big jumps,
so that $\mathbb{E}(|L_k(t)|^{p})<\infty$, $p\geq0$, but $L_{k}(t)$ is close to $\alpha$-stable L\'evy process.
\end{example}

In a formal way (via multiplying by $dt$) we are able to rewrite \eqref{B} as an abstract stochastic evolution equation
\begin{equation}\label{FB}
du=(\nu\partial_{xx}u-u\partial_xu)dt+dL_t,\quad u(0)=u_0.
\end{equation}
We need to give a rigorous meaning to the formal equation \eqref{FB}. It is the abbreviation of the corresponding integral equation,
\begin{equation}\label{FBI}
u(t)=u_0+\int_0^t(\nu\partial_{xx}u-u\partial_xu)ds+L(t),\quad \forall\,t\geq0,\quad \forall\,x\in\mathbb{S}^{1},
\end{equation}
which is defined in a distributional sense.

\begin{definition}
We say that $u^{\omega}(t,x)$ is a unique pathwise solution of \eqref{B} if the stochastic differential equation \eqref{FBI} holds for each $\omega\in Q$, where $Q$ is a negligible set (i.e., $Q\in\mathcal{F}$ such that $\mathbb{P}(Q)=0$).
If $u_{0}\in H^{\theta},\,\theta\geq0$, then there is a unique pathwise solution $u^{\omega}\in\mathcal{D}(\mathbb{R}^{+},H^{\theta})$ of \eqref{B}. It depends on the random parameter $\omega$.
\end{definition}
\begin{remark}
Define a bilinear operator $Q(u,v):=\frac{1}{2}uv'+\frac{1}{2}u'v, u,v\in H^{1}$ and write $Q(u)=Q(u,u)$ using symmetry. We rewrite \eqref{FB} into the following stochastic Burgers equation driven by $L(t)$ as an alternative representation:
\begin{equation*}
du=[\nu\Delta u-Q(u)]dt+dL(t),\quad u(0)=u_0\in H^{\theta},\,\theta\geq1.
\end{equation*}
The mild formulation provides the basic ingredients to verify the existence and uniqueness of solution by the variation of constants formula.
 We call a stochastic process $u\in\mathcal{D}(\mathbb{R}^{+},H^{\theta}),\theta\geq0$ a mild solution of \eqref{FB} with initial condition $u(0)=u_0$ if
\begin{equation*}
u(t)=u_0e^{t\nu\Delta}-\int_{0}^{t}e^{(t-s)\nu\Delta}Q(u(s))ds+\int_{0}^{t}e^{(t-s)\nu\Delta}dL(s)
\end{equation*}
for all $t\in[0,T]$ a.s. Based on the regularity of the stochastic convolution \cite{R14}
\begin{equation*}
\int_{0}^{t}e^{(t-s)\nu\Delta}dL_s=\sum_{k\in\mathbb{Z}^{\ast}}\beta_{k}\int_{0}^{t}e^{(t-s)\nu\lambda_{k}}dL_{k}(s)e_{k},
\end{equation*}
we are able to solve the stochastic Burgers equation using Banach's fixed point argument.
\end{remark}

\section{Well-posedness of stochastic Burgers equation}\label{Well}
For $T>0$ and $n\in\mathbb{N}$, we endow the Banach space $\mathcal{D}([0,T];H^{n})$ with the uniform norm $\|u\|_{\mathcal{D}_T^n}=\sup\limits_{t\in[0,T]}\|u(t)\|_{n}$ since every c\`{a}dl\`{a}g function on $[0,T]$ is bounded.

Consider the stochastic heat equation
\begin{equation}\label{CH}
\partial_tv=\nu\partial_{xx}v+\eta(t,x),\quad v(0,x)=u_{0}(x).
\end{equation}
The decomposition \eqref{decom} of the force $L(t)$ in the trigonometric base
 \eqref{try} demonstrates that there exists a unique pathwise solution $v\in\mathcal{D}([0,T];H^{n})$ of stochastic parabolic partial differential equation \eqref{CH}, i.e., for all $t\in[0,T]$,
\begin{equation*}
v(t)=u_{0}+\nu\int_{0}^{t}\partial_{xx}v(s)ds+L(t),\quad u_0\in H^{n},
\end{equation*}
by explicitly calculating the Fourier coefficients of $v$.

Now we decompose a solution of model \eqref{B} into
\begin{equation*}
u(t,x)=v(t,x)+w(t,x),
\end{equation*}
where $v$ is the solution of system \eqref{CH}. Thus $w$ satisfies the perturbed Burgers equation
\begin{equation}\label{DB}
\partial_tw=\nu\partial_{xx}w-\frac{1}{2}\partial_{x}(w+v)^{2},\quad w(0)=0.
\end{equation}
The advantage of reducing from equation \eqref{B} to \eqref{DB} is the fact that all the coefficients of the latter are regular. In the following we solve the equation \eqref{DB}. We start with a lemma to deal with functional inequality.

\begin{lemma}\label{XY}
Let $n\in\mathbb{N}^{\ast}$, $q\in[1,\infty]$ and $w\in H^{n+1}$. Then, there exists $C(n)>0$ such that
\begin{equation}\label{W}
|\langle\partial_{x}^{2n}w,\partial_{x}w^{2}\rangle|\leq C(n)\|w\|_{n+1}^{1+\epsilon}|w|_{q}^{2-\epsilon},\quad \epsilon(n,q)=\frac{n+\frac{2}{q}-\frac{1}{2}}{n+\frac{1}{q}+\frac{1}{2}}.
\end{equation}
\end{lemma}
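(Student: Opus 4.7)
The plan is to reduce the left-hand side to a balanced form by moving derivatives via integration by parts, then expand using Leibniz's rule, and finally handle each resulting trilinear term by Hölder together with one-dimensional Gagliardo--Nirenberg interpolation anchored at the two norms $\|w\|_{n+1}$ and $|w|_q$. The specific value of $\epsilon(n,q)$ will then drop out of a single linear equation in the interpolation exponents.

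Concretely, I would start from the identity
\begin{equation*}
\langle\partial_x^{2n} w,\partial_x w^2\rangle=(-1)^{n-1}\langle\partial_x^{n+1} w,\partial_x^{n}(w^2)\rangle,
\end{equation*}
obtained by transferring $n-1$ derivatives from the first factor to the second (all boundary terms vanish by periodicity). Expanding $\partial_x^n(w^2)=\sum_{k=0}^{n}\binom{n}{k}\partial_x^k w\,\partial_x^{n-k}w$ via Leibniz reduces matters to estimating, for $0\le k\le n$, each trilinear expression $|\int\partial_x^{n+1}w\cdot\partial_x^k w\cdot\partial_x^{n-k}w\,dx|$. I would then apply Hölder's inequality with the triple $(2,p_1,p_2)$, where $\tfrac{1}{p_1}+\tfrac{1}{p_2}=\tfrac{1}{2}$, putting the highest-order factor $\partial_x^{n+1}w$ in $L^2$ to produce the factor $\|w\|_{n+1}$.

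For the remaining two factors, I would invoke the one-dimensional Gagliardo--Nirenberg inequality (available on $\mathbb S^1$ for zero-mean functions thanks to Poincaré) in the form $\|\partial_x^j w\|_{L^p}\le C\,\|w\|_{n+1}^{\theta_j}|w|_q^{1-\theta_j}$ with the scaling relation $\tfrac{1}{p}-j=\theta_j(\tfrac{1}{2}-(n+1))+(1-\theta_j)\tfrac{1}{q}$. Summing this relation for the two exponents $\theta_k$ and $\theta_{n-k}$ and using $\tfrac{1}{p_1}+\tfrac{1}{p_2}=\tfrac{1}{2}$ collapses the individual $k$-dependence: the sum $\tau:=\theta_k+\theta_{n-k}$ is forced to satisfy
\begin{equation*}
\tfrac{1}{2}-n=\tau\bigl(-\tfrac{1}{2}-n\bigr)+(2-\tau)\tfrac{1}{q},
\end{equation*}
which a direct solve yields $\tau=(n+\tfrac{2}{q}-\tfrac{1}{2})/(n+\tfrac{1}{q}+\tfrac{1}{2})=\epsilon(n,q)$. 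Combining with the Hölder step produces the bound $C(n)\|w\|_{n+1}^{1+\epsilon}|w|_q^{2-\epsilon}$ after summing over $k$.

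The main technical obstacle is ensuring that for each $k\in\{0,\dots,n\}$ the pair of exponents $p_1,p_2$ can be chosen so that the Gagliardo--Nirenberg admissibility conditions $\theta_j\in[j/(n+1),1]$ hold simultaneously; this requires distributing the Lebesgue exponents asymmetrically depending on whether $k$ is closer to $0$ or $n$ (for instance, taking one of the $p_i$ close to $\infty$ when the corresponding derivative order is small, and moving toward $p_i=4$ in the balanced regime). The endpoint cases $k=0$ and $k=n$, and the special case $q=\infty$, have to be treated separately but follow along the same pattern by interpreting $|w|_q$ in the natural limiting sense. Once the admissibility is verified uniformly in $k$, the constant $C(n)$ absorbs the binomial coefficients and the finitely many Gagliardo--Nirenberg constants, completing the proof.
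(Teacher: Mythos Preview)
Your proposal is correct and follows essentially the same route as the paper: integrate by parts to balance the derivatives as $\int w^{(n+1)}\,\partial_x^{n}(w^2)$, expand by Leibniz into a sum of terms $\int|w^{(m)}w^{(n-m)}w^{(n+1)}|$, split each by H\"older with $\tfrac{1}{p_1}+\tfrac{1}{p_2}=\tfrac12$, and interpolate the two lower-order factors via Gagliardo--Nirenberg between $\|w\|_{n+1}$ and $|w|_q$ so that the summed exponent is exactly $\epsilon(n,q)$. Your treatment is in fact somewhat more careful than the paper's, since you flag the admissibility constraint $\theta_j\in[j/(n+1),1]$ and the need to choose $p_1,p_2$ asymmetrically in $k$, a point the paper passes over in silence.
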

\begin{proof}
Using Leibniz's formula, we have
\begin{equation}\label{Cm}
|\langle\partial_{x}^{2n}w,\partial_{x}w^{2}\rangle|\leq C(n)\sum_{m=0}^{n}\int_{\mathbb{S}^{1}}|w^{(m)}w^{(n-m)}w^{(n+1)}|dx.
\end{equation}
We maximize the integral at the right hand side of \eqref{Cm} by H\"{o}lder's inequality to obtain
\begin{equation*}
\int_{\mathbb{S}^{1}}|w^{(m)}w^{(n-m)}w^{(n+1)}|dx\leq|w^{(m)}|_{p_{1}}|w^{(n-m)}|_{p_{2}}\|w\|_{n+1},
\end{equation*}
where $\frac{1}{p_{1}}+\frac{1}{p_{2}}=\frac{1}{2}$. Utilizing Gagliardo-Niremberg inequality, we get
\begin{equation*}
\int_{\mathbb{S}^{1}}|w^{(m)}w^{(n-m)}w^{(n+1)}|dx\leq C\|w\|_{n+1}^{1+\epsilon}|w|_{q}^{2-\epsilon},
\end{equation*}
where $\epsilon=\epsilon(m,q,p_{1},n+1)+\epsilon(n-m,q,p_{2},n+1)=\frac{m+\frac{2}{q}-\frac{1}{2}}{m+\frac{1}{q}+\frac{1}{2}}$.
Combing the last inequality and relation \eqref{Cm}, we deduce \eqref{W}.
\end{proof}

\begin{theorem}\label{eu}
Let $n\geq1$, $T>0$ and $v\in\mathcal{D}([0,T];H^{n})$. Then there is a unique pathwise solution $w\in\mathcal{D}([0,T];H^{n})$
of equation \eqref{DB} and there exists $C_n(T,\nu,\|v\|_{\mathcal{D}_T^n})>0$ such that
\begin{equation}\label{wn}
\|w\|_{\mathcal{D}_T^n}^{2}+\int_{0}^{T}\|w(t)\|_{n+1}^{2}dt\leq C_n.
\end{equation}
 \end{theorem}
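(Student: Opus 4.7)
The plan is to fix $\omega \in \Omega$ so that $v$ becomes a deterministic element of $\mathcal{D}([0,T]; H^n)$, and to attack the resulting deterministic perturbed Burgers equation by a Galerkin approximation driven by an a priori estimate based on Lemma \ref{XY}. Let $P_N$ denote the orthogonal projection in $H$ onto $\mathrm{span}\{e_k : 1 \leq |k| \leq N\}$, and consider
$$
\partial_t w_N = \nu \partial_{xx} w_N - \tfrac{1}{2} P_N \partial_x (w_N + P_N v)^2, \qquad w_N(0) = 0.
$$
Since $v$ is bounded on $[0,T]$, the right-hand side is locally Lipschitz in $w_N \in P_N H$, so classical ODE theory gives a unique local solution, and global existence on $[0,T]$ reduces to the a priori bound below.

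The a priori estimate is obtained in two stages. First, the $L^2$ energy identity, together with the cancellation $\int_{\mathbb{S}^1} w_N \partial_x w_N^2\, dx = 0$ (from periodicity) and Cauchy-Schwarz plus the algebra inequality \eqref{uvn} on the mixed terms, gives $\|w_N\|_{\mathcal{D}_T^0} \leq C_0(T, \nu, \|v\|_{\mathcal{D}_T^n})$ uniformly in $N$. Second, testing with $(-\Delta)^n w_N$ and integrating by parts produces, up to a sign,
$$
\tfrac{1}{2}\tfrac{d}{dt}\|w_N\|_n^2 + \nu \|w_N\|_{n+1}^2 = \pm\tfrac{1}{2}\langle \partial_x^{2n} w_N, \partial_x (w_N + P_N v)^2 \rangle.
$$
Expanding $(w_N + P_N v)^2 = w_N^2 + 2 w_N P_N v + (P_N v)^2$, the pure-$w_N^2$ contribution is controlled by Lemma \ref{XY} with a finite $q$ (say $q=2$), giving $C\|w_N\|_{n+1}^{1+\epsilon}\|w_N\|^{2-\epsilon}$ with $\epsilon = \epsilon(n,2) < 1$, while the mixed and pure-$v$ contributions are estimated via one integration by parts, Cauchy-Schwarz and \eqref{uvn} by $C\|w_N\|_{n+1}(\|w_N\|_n + \|v\|_n)\|v\|_n$. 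Applying Young's inequality to absorb a fraction of $\nu\|w_N\|_{n+1}^2$ on the left, and invoking the $L^2$ bound from Stage 1 to tame the factor $\|w_N\|^{2-\epsilon}$, yields a closed linear differential inequality
$$
\tfrac{d}{dt}\|w_N\|_n^2 + \tfrac{\nu}{2}\|w_N\|_{n+1}^2 \leq C(n,\nu,\|v\|_{\mathcal{D}_T^n})\,\big(1 + \|w_N\|_n^2\big),
$$
and Gronwall delivers \eqref{wn} uniformly in $N$.

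With $w_N$ bounded in $L^\infty([0,T];H^n)\cap L^2([0,T];H^{n+1})$, the equation controls $\partial_t w_N$ in $L^2([0,T];H^{n-2})$, so an Aubin-Lions argument extracts a subsequence converging strongly in $L^2([0,T];H^n)$ to a limit $w$ that inherits \eqref{wn} and solves \eqref{DB} in the weak sense; the càdlàg regularity of $w$ in $H^n$ then follows from the càdlàg regularity of $v$ and the time continuity of the Duhamel integral of the nonlinearity. Uniqueness is standard: the difference $z = w_1 - w_2$ of two solutions satisfies $\partial_t z = \nu \partial_{xx} z - \tfrac{1}{2}\partial_x[z(w_1 + w_2 + 2v)]$ with $z(0)=0$, and an $L^2$ energy estimate using the Sobolev embedding \eqref{So} and Gronwall forces $z \equiv 0$. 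The main obstacle is the two-stage a priori estimate: naively applying Lemma \ref{XY} with $q = \infty$ generates a super-linear Bernoulli-type inequality that does not close globally, and one is forced to first secure an $L^2$ bound and then use a finite $q$ so that the residual $|w_N|_q$ factor is already controlled; fine-tuning the exponent $\epsilon(n,q)$ against the dissipation reserve $\nu\|w_N\|_{n+1}^2$ is what makes the argument work.
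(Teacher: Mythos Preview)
Your proposal is correct and follows essentially the same route as the paper: Galerkin approximation for existence, uniqueness via an $L^2$ energy estimate on the difference using the embedding $H^1\hookrightarrow L^\infty$, and the key two-stage a priori bound (first close the $L^2$ estimate by Gronwall, then test with $(-\Delta)^n w$ and invoke Lemma~\ref{XY} with $q=2$ together with Young's inequality to absorb into $\nu\|w\|_{n+1}^2$). The paper omits the compactness passage you sketch via Aubin--Lions, and in the mixed terms it interpolates $\|w\|_n$ between $\|w\|$ and $\|w\|_{n+1}$ (Gagliardo--Nirenberg) so that after Young only powers of $\|w\|$ survive on the right, whereas you keep a linear $\|w\|_n^2$ and close by Gronwall; both variants work and lead to the same bound \eqref{wn}.
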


\begin{proof}
Based on Galerkin's approximation for the stochastic Burgers equation \cite{BJ}, we immediately build the existence of $w$, and thus the technical details are omitted. As for uniqueness, let $w_1, w_2\in\mathcal{D}([0,T];H^{n})$ be solutions of \eqref{DB}, i.e.,
\begin{equation*}
\partial_tw_i=\nu\partial_{xx}w_i-\frac{1}{2}\partial_{x}(w_i+v)^{2},\quad w_i(0)=0,~~i=1,2.
\end{equation*}
Then the difference, $w:=w_1-w_2$ satisfies the equation
\begin{equation*}
\partial_tw=\nu\partial_{xx}w-\frac{1}{2}\partial_{x}((w_1+w_2+2v)w),\quad w(0)=0.
\end{equation*}
Taking the scalar product in $\langle\cdot,\cdot\rangle_{L^2}$ of this previous equation with respect to $w$, and doing integration by parts,
\begin{equation*}
\frac{1}{2}\frac{d}{dt}\|w(t)\|^{2}=-\|w(t)\|_1^{2}-\frac{1}{2}\int_{\mathbb{S}^{1}}(w_1(t)+w_2(t)+2v(t))w(t)\partial_{x}w(t)dx
\end{equation*}
Due to Cauchy-Schwarz inequality, the injection of $H^{1}$ into $L^{\infty}$ and Young's inequality,
\begin{align*}
\int_{\mathbb{S}^{1}}\big|(w_1(t)+w_2(t)+2v(t))w(t)\partial_{x}w(t)\big|dx&\leq\|(w_1(t)+w_2(t)+2v(t))w(t)\|\cdot\|\partial_{x}w(t)\|\\
                                                                          &\leq C\big(\frac{C}{2\nu}\|w(t)\|^{2}+\frac{\nu}{2C}\|w(t)\|_{1}^{2}\big).
\end{align*}
Hence, $\frac{d}{dt}\|w(t)\|^{2}\leq C_1\|w(t)\|^{2}$. Since $w(0)=0$, Gronwall's inequality admits that  $\|w(t)\|^{2}=0$ . The uniqueness holds true.

To prove the inequality \eqref{wn}, we first discuss the case $n=0$ of \eqref{wn}.
After multiplying \eqref{DB} by $w$ and integrating in space, the left side is written as $\int_{\mathbb{S}^{1}}w\partial_twdx=\frac{1}{2}\frac{d}{dt}\|w(t)\|^{2}$, and the right side becomes
\begin{align*}
\int_{\mathbb{S}^{1}}w\big(\nu\partial_{xx}w-\frac{1}{2}\partial_{x}(w+v)^{2}\big)dx&=-\nu\|w(t)\|_{1}^{2}+\frac{1}{2}\int_{\mathbb{S}^{1}}\big(w^{2}\partial_{x}w +v^{2}\partial_{x}w+2vw\partial_{x}w\big)dx\\
                                                                                    &=-\nu\|w(t)\|_{1}^{2}+\frac{1}{2}\int_{\mathbb{S}^{1}}\big(v^{2}\partial_{x}w+2vw\partial_{x}w\big)dx.
\end{align*}
It is a direct consequence of Cauchy-Schwarz inequality, Young's inequality and \eqref{So} with $k=0$, that
\begin{equation*}
\frac{1}{2}\int_{\mathbb{S}^{1}}\big(v^{2}\partial_{x}w+2vw\partial_{x}w\big)dx\leq\frac{\nu}{4}\|w(t)\|_{1}^{2}+c\|v\|_{\mathcal{D}_T^1}^{4}+\frac{\nu}{4}\|w(t)\|_{1}^{2}+c\|v\|_{\mathcal{D}_T^1}^{2}\|w(t)\|^{2},\quad c=c(\nu).
\end{equation*}
The above yields 
\begin{equation*}
\frac{d}{dt}\|w(t)\|^{2}+\nu\|w(t)\|_{1}^{2}\leq c_1\|v\|_{\mathcal{D}_T^1}^{4}+c_2\|v\|_{\mathcal{D}_T^1}^{2}\|w(t)\|^{2}.
\end{equation*}
Gronwall's inequality infers that
\begin{equation}\label{WVT}
\|w(t)\|^{2}\leq tc_1\|v\|_{\mathcal{D}_T^1}^{4}e^{c_2\|v\|_{\mathcal{D}_T^1}^{2}}\leq c(T),\quad0\leq t\leq T,
\end{equation}
which will be especially helpful for us to estimate $\|w(t)\|_n$.

Now, we multiply the equation \eqref{DB} with $w^{(2n)}$, and use integration by parts, to obtain
\begin{align*}
&\frac{1}{2}\frac{d}{dt}\|w(t)\|_n^{2}+\nu\|w(t)\|_{n+1}^{2}\\
&\leq\Big|\Big\langle\frac{d^{n}}{dx^{n}}(w(t)+v(t))^{2},\frac{d^{n+1}}{dx^{n+1}}w(t)\Big\rangle\Big| \\
                                                           &\leq\underset{=:J_1}{\underbrace{\big|\langle w^{2}(t)^{(1)},w(t)^{(2n)}\rangle\big|}}+\underset{=:J_2}{\underbrace{2\big|\langle( w(t)v(t))^{(n)},w(t)^{(n+1)}\rangle\big|}}+\underset{=:J_3}{\underbrace{\big|\langle v^{2}(t)^{(n)},w(t)^{(n+1)}\rangle\big|}}.
\end{align*}
By \eqref{W} (with $\epsilon=\epsilon(n,2)$),
\begin{equation*}
|J_{1}|\leq C_1\|w(t)\|_{n+1}^{1+\epsilon}\|w(t)\|^{2-\epsilon},\quad\epsilon=\frac{2n+1}{2n+2}.
\end{equation*}
Applying Young's inequality to the right hand side of this inequality,
\begin{equation*}
J_{1}\leq\frac{\nu}{4}\|w(t)\|_{n+1}^{2}+C_{1}'\|w(t)\|^{c_{1}'}.
\end{equation*}
To estimate $J_{2}=2\langle w(t)v(t),\partial_{x}w(t)\rangle_{n}$, Gagliardo-Niremberg inequality and \eqref{uvn} indicate that
\begin{align*}
J_{2}&\leq C\|w(t)v(t)\|_{n}\|w(t)\|_{n+1}\leq C_1\|w(t)\|_{n}\|v(t)\|_{n}\|w(t)\|_{n+1} \\
     &\leq C_n\|v(t)\|_{n}\|w(t)\|^{1-\frac{n}{n+1}}\|w(t)\|_{n+1}^{1+\frac{n}{n+1}}.
\end{align*}
Young's inequality concludes that
\begin{equation*}
J_2\leq\frac{\nu}{4}\|w(t)\|_{n+1}^{2}+C_2'(\|v\|_{\mathcal{D}_T^n})\|w(t)\|^{c_2'}.
\end{equation*}
In a similar manner,
\begin{equation*}
J_3\leq\frac{\nu}{4}\|w(t)\|_{n+1}^{2}+C_3'(\|v\|_{\mathcal{D}_T^n})\|w(t)\|^{c_3'}.
\end{equation*}
Using \eqref{WVT}, we have
\begin{equation*}
\frac{1}{2}\frac{d}{dt}\|w(t)\|_{n}^{2}+\frac{\nu}{4}\|w(t)\|_{n+1}^{2}\leq C'(\|v(t)\|_{\mathcal{D}_T^n})C(T)^{c'}.
\end{equation*}
Finally, the integration of the above inequality on $[0,T]$ allows us to get the validity of \eqref{wn}.
\end{proof}

Consequently, Theorem \ref{eu} ensures the following result:
\begin{theorem}
For all $T>0$, $n\geq1$ and $u_0\in H^n$, there is a unique pathwise solution $u$ of model \eqref{B}.
In addition, $u$ satisfies that
\begin{equation*}
\|u\|_{\mathcal{D}_T^n}^{2}+\int_{0}^{T}\|u(t)\|_{n+1}^{2}dt\leq C_n,
\end{equation*}
where $C_n=C_n(T,\nu,\|u_0\|_{n},\|L\|_{\mathcal{D}_T^n})>0.$
\end{theorem}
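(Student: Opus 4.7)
The plan is to exploit the decomposition $u = v + w$ already introduced in the paper, where $v$ solves the linear stochastic heat equation \eqref{CH} and $w$ solves the (pathwise) perturbed Burgers equation \eqref{DB}. The strategy is then to leverage Theorem \ref{eu}, which already handles \eqref{DB}, so that the only genuine extra work is to control $v$ and then glue the two pieces together.

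First I would construct $v \in \mathcal{D}([0,T]; H^n)$ explicitly. Writing $u_0 = \sum_k u_{0,k} e_k$ and using the decomposition \eqref{decom} of $L$, each Fourier mode of $v$ solves a scalar linear SDE driven by $L_k$ whose Duhamel formula gives $\hat v_k(t) = e^{-\nu\lambda_k t} u_{0,k} + \beta_k \int_0^t e^{-\nu\lambda_k(t-s)} dL_k(s)$. Summability of $\sum_k \lambda_k^n \beta_k^2 < \infty$ (guaranteed by $\gamma_0 > 1$) together with a pathwise integration-by-parts in the stochastic convolution yields a c\`adl\`ag $H^n$-valued path and the estimate
\begin{equation*}
\|v\|_{\mathcal{D}_T^n} \leq C(T,\nu)\bigl(\|u_0\|_n + \|L\|_{\mathcal{D}_T^n}\bigr).
\end{equation*}

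Next, applying Theorem \ref{eu} pathwise for each $\omega$ produces a unique $w \in \mathcal{D}([0,T]; H^n)$ solving \eqref{DB} with bound \eqref{wn}; note that $C_n$ there depends only on $T,\nu,\|v\|_{\mathcal{D}_T^n}$, and hence on $T,\nu,\|u_0\|_n,\|L\|_{\mathcal{D}_T^n}$ after the previous step. Setting $u := v + w$ and adding the integral forms of \eqref{CH} and \eqref{DB} reproduces \eqref{FBI}, because $\tfrac12 \partial_x(w+v)^2 = (v+w)\partial_x(v+w) = u\partial_x u$. For uniqueness, any other pathwise solution $\tilde u \in \mathcal{D}([0,T]; H^n)$ of \eqref{B} yields $\tilde w := \tilde u - v \in \mathcal{D}([0,T];H^n)$ solving \eqref{DB}; uniqueness in Theorem \ref{eu} forces $\tilde w = w$ and hence $\tilde u = u$. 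Combining $\|u\|_{\mathcal{D}_T^n}^2 \leq 2\|v\|_{\mathcal{D}_T^n}^2 + 2\|w\|_{\mathcal{D}_T^n}^2$ with the corresponding inequality for the $L^2_t H^{n+1}$ term and substituting the two previous estimates delivers the stated bound.

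The main obstacle is the first step: producing a genuinely c\`adl\`ag $H^n$-valued modification of the stochastic convolution with the cylindrical L\'evy process and controlling it by $\|L\|_{\mathcal{D}_T^n}$ alone (rather than by Poisson integrals against $\mu$). The Fourier diagonalisation decouples the modes and the summability $\beta_k \lesssim \lambda_k^{-\gamma_0}$ with $\gamma_0 > 1$ (using the bounded-jumps assumption, which gives moments of all orders) makes this standard but nontrivial; everything else is a routine patching argument.
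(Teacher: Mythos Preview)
Your proposal is correct and follows exactly the route the paper intends: the paper gives no separate proof of this theorem but simply states that it is ``ensured'' by Theorem~\ref{eu} via the decomposition $u=v+w$, which is precisely the gluing argument you spell out. Your flagging of the c\`adl\`ag $H^n$-regularity of the stochastic convolution as the one nontrivial ingredient is apt---the paper handles this only by the sentence ``by explicitly calculating the Fourier coefficients of $v$''---and your integration-by-parts strategy to control it pathwise by $\|L\|_{\mathcal D_T^n}$ is the natural way to fill that gap.
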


\section{Oleinik-Kruzkov inequality}\label{OK}
The goal of this section is to estimate $u$ and $\partial_{x}u$. To get upper bounds for the norms, the key point is the Oleinik-Kruzkov inequality \eqref{O-K}, which we apply to solution of \eqref{B} with fixed $\omega$. The inequality was proved by Oleinik-Kruzkov for the free Burgers equation, but their
argument applies to the stochastic equation \eqref{B} in pathwise sense. The following theorem provides us with estimations on $u$ in $L^{\infty}$, and
on $\partial_{x}u$ in $L^{1}$.

\begin{theorem}\label{At}
For any initial data $u_0\in H^{1}$, any $p\geq1$, any $\epsilon\in(0,T]$ and $\nu\in(0,1]$, uniformly in $t\in[\epsilon,T]$ we have:
\begin{equation}\label{O-K}
\mathbb{E}(|u(t,\omega)|_{\infty}^{p}+|\partial_{x}u(t,\omega)|_{1}^{p})\leq C\epsilon^{-p},
\end{equation}
where the constant $C$ depends only on the random force $L(t)$. 
\end{theorem}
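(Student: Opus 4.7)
The plan is to derive the estimate pathwise for each $\omega$ (outside a null set) and then take $p$-th moments. I use the decomposition $u = v + w$ from Section 3: $v$ solves \eqref{CH} with explicit Fourier representation via \eqref{decom}, and $w$ satisfies \eqref{DB}, whose coefficients are smooth once $v$ is fixed. Since $u(t,\cdot)$ has zero spatial mean, the elementary inequalities
\begin{equation*}
|u(t)|_{\infty} \leq |\partial_x u(t)|_{1}, \qquad |\partial_x u(t)|_{1} = 2\int_{\mathbb{S}^{1}}(\partial_x u(t,x))^{+}dx \leq 4\pi\,\max_{x}(\partial_x u(t,x))^{+}
\end{equation*}
reduce the problem to a one-sided upper bound on $\partial_x u$.

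The heart of the argument is a maximum-principle estimate for $\phi := \partial_x w$. Differentiating \eqref{DB} in $x$ yields the transport-diffusion equation
\begin{equation*}
\partial_t \phi + (w+v)\partial_x \phi = \nu \partial_{xx}\phi - (\phi + \partial_x v)^{2} - (w+v)\partial_{xx} v.
\end{equation*}
Setting $M(t,\omega):=\max_{x\in\mathbb{S}^{1}}\phi(t,x,\omega)$, at a maximizer $x^{*}$ one has $\partial_{x}\phi=0$ and $\partial_{xx}\phi\leq 0$; together with the elementary estimate $-(\phi+a)^{2}\leq -\tfrac{1}{2}\phi^{2}+a^{2}$, a Danskin-type envelope argument produces the Oleinik-Kruzkov ODE inequality
\begin{equation*}
\tfrac{d}{dt}M(t) \leq -\tfrac{1}{2} M(t)^{2} + A(t,\omega), \qquad A := |\partial_x v|_{\infty}^{2} + (|w|_{\infty}+|v|_{\infty})|\partial_{xx}v|_{\infty}.
\end{equation*}
Comparison with the autonomous ODE $y'=-y^{2}/2+A_{*}$, where $A_{*}(\omega):=\sup_{s\leq T}A(s,\omega)$, yields the pathwise bound $M(t)\leq 2/t + 2\sqrt{A_{*}(\omega)}$ for $t\in(0,T]$.

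Combining with $(\partial_x u)^{+}\leq M + |\partial_x v|_{\infty}$ and the inequalities of the first paragraph,
\begin{equation*}
|u(t)|_{\infty} + |\partial_x u(t)|_{1} \leq \frac{C}{t} + C\bigl(1+\sqrt{A_{*}(\omega)}+|\partial_x v(t)|_{\infty}\bigr).
\end{equation*}
Raising to the $p$-th power and taking expectations, each random term on the right has finite $L^{p}(\Omega)$-norm: the norms of $w$ by Theorem 3.2, and the norms of $v$ by the Fourier representation \eqref{decom} combined with $\beta_{k}\lesssim\lambda_{k}^{-\gamma_{0}}$, $\gamma_{0}>1$, Theorem 2.1, and parabolic smoothing valid for $t>0$. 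On $t\in[\epsilon,T]$ the dominant term is $C/t\leq C/\epsilon$, which delivers \eqref{O-K}.

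The main obstacle is making the pointwise maximum-principle step rigorous: $u_{0}\in H^{1}$ does not immediately guarantee $\phi(t,\cdot)\in C^{2}$ or $\partial_{xx}v(t,\cdot)\in L^{\infty}$ (the latter is borderline against the Sobolev regularity supplied by $\gamma_{0}>1$, which places $v$ only in $H^{2\gamma_{0}-1/2-\delta}$). I would circumvent this by first proving the estimate for smooth Galerkin approximations $u_{N}$, where the classical maximum principle applies directly, and then passing to the limit in $L^{p}(\Omega)$ using uniform-in-$N$ moment bounds coming from Theorems 2.1 and 3.2 together with the parabolic smoothing of \eqref{DB} and \eqref{CH}. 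Alternatively, one can replace the pointwise envelope step by an $L^{q}$ estimate on $\phi^{+}$ with $q\to\infty$, which avoids the $C^{2}$ regularity requirement at the cost of more bookkeeping.
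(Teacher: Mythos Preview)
Your ODE comparison for $M(t)=\max_x\phi(t,x)$ is the right Oleinik--Kruzkov idea, but the decomposition you chose makes the resulting bound depend on $\nu$ and on $u_0$, which defeats the theorem. The forcing term $A$ in your differential inequality contains $|\partial_{xx}v|_{\infty}$ and $|\partial_x v|_{\infty}$, where $v$ solves the stochastic heat equation \eqref{CH}. The initial-data part $e^{t\nu\Delta}u_0$ only gains regularity through the semigroup with the factor $\nu$ in the exponent, so for $u_0\in H^{1}$ one has, by parabolic smoothing, $|\partial_{xx}e^{t\nu\Delta}u_0|_{\infty}\lesssim(\nu t)^{-\alpha}\|u_0\|_{1}$ for some $\alpha>0$. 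Hence $A_{*}(\omega)$ blows up as $\nu\to0$ and also carries the norm $\|u_0\|_{1}$, so the constant $C$ you obtain is not the $\nu$- and $u_0$-independent one demanded by \eqref{O-K}. The same defect enters a second time when you invoke Theorem~\ref{eu} to control $|w|_{\infty}$ inside $A$: the constant there is $C_n(T,\nu,\|v\|_{\mathcal{D}_T^n})$, explicitly $\nu$-dependent. Passing to Galerkin approximations does not repair this, because the bounds on $A_{*,N}$ are already not uniform in $\nu$ before you let $N\to\infty$.

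The paper avoids both problems by a different splitting: it subtracts the raw L\'evy process, setting $v:=u-L$ (not the heat-equation solution), so that every forcing term in the maximum-principle equation involves only space derivatives of $L$, whose size is governed solely by $\|L\|_{\mathcal{D}_T^n}$ and is independent of $\nu$ and $u_0$. It then considers $w:=t\,\partial_x v$ on the full cylinder $[0,T]\times\mathbb{S}^1$; since $w|_{t=0}=0$, the optimality conditions at the space-time maximum yield, after a short contradiction argument, $M\leq 4TK$ with $K=\max(1,\sup_{t\le T}|L(t)|)$. From this one reads off $t|u|_{\infty},\,t|\partial_x u|_{1}\leq 10T(1+\|L\|_{\mathcal{D}_T^n})$ pathwise, and taking $p$-th moments gives \eqref{O-K}. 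Your scheme can be salvaged by making exactly this change of decomposition; once the forcing involves $L$ rather than the heat-equation solution, your ODE comparison and the paper's cylinder-maximum argument become two equivalent ways of extracting the $1/t$ bound.
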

\begin{proof}
If $L(t)$ is not zero, we propose to take the solution $u$ of model \eqref{B} as $u=v+L$ when $u, L\in\mathcal{D}([0,T],H^{n})$. Therefore, $v:=u-L$ is solution of
\begin{equation}\label{Lv}
\partial_tv=-u\partial_xu+\nu\partial_{xx}v+\nu\partial_{xx}L(t).
\end{equation}
We calculate the derivative of the equation \eqref{Lv} with respect to $x$ and multiply it by $t$,
\begin{equation}\label{Lvxt}
t\partial_{tx}v=-t\big((\partial_xu)^{2}+u\partial_{xx}u\big)+\nu t\partial_{xxx}v+\nu t\partial_{xxx}L(t).
\end{equation}
Set $w:=t\partial_{x}v$ and rewrite the equation \eqref{Lvxt} into
\begin{equation}\label{wv}
\partial_{t}w=\partial_{x}v-t(\partial_xu)^{2}-tu\partial_{xx}L(t)-u\partial_xw+\nu\partial_{xx}w+\nu t\partial_{xxx}L(t).
\end{equation}
Now, we consider the function $w(t,x)$ on the cylinder $[0,T]\times \mathbb{S}^1$. Since $w|_{t=0}=0$ and $\int_{\mathbb{S}^1}w(x)dx=0$,
either $w$ is identically zero, or it takes maximal value $M>0$ at a point $(t_{1},x_{1})\in[0,T]\times \mathbb{S}^1$ with $t_{1}>0$.
If $w$ is identically zero, then $\partial_{x}v(t,x)\equiv0$ and $v(t,x)=0$ over $[0,T]\times \mathbb{S}^1$. Hence \eqref{O-K} holds.
If $w$ reaches its maximum $M>0$ at $(t_{1},x_{1})\in[0,T]\times \mathbb{S}^1$, we denote
\begin{equation}\label{K}
K:=\max(1,\max_{0\leq t\leq T}|L(t)|).
\end{equation}
Then,
\begin{equation*}
K\leq\max(1,C\sup_{0\leq t\leq T}\|L(t)\|_{n}),\quad C>0.
\end{equation*}

Let us show that $M\leq4TK.$
Reducing to an absurdity, suppose that $M>4TK$. Optimality conditions recognize that
\begin{equation}\label{wtx}
\partial_{t}w\geq0,\,\partial_{x}w=0\quad\text{and}\quad\partial_{xx}w\leq0\quad\text{at the point}\,(t_{1},x_{1}).
\end{equation}
By \eqref{wv} and \eqref{wtx}, we figure out
\begin{equation*}
-\partial_{x}v+t(\partial_{x}u)^{2}+tu\partial_{xx}L(t)\leq\nu t\partial_{xxx}L(t),\quad\text{at the point}\,(t_{1},x_{1}).
\end{equation*}
Multiplying this previous inequality by $t$ and using the fact that $w(t_{1},x_{1})=M$ and $t^{2}(\partial_{x}u)^{2}=(w+t\partial_{x}L_{t})^{2}$,
\begin{equation*}
-M+(M+t\partial_{x}L(t))^{2}+t^{2}u\partial_{xx}L(t)\leq\nu t^{2}\partial_{xxx}L(t),\quad\text{at point}\,(t_{1},x_{1}).
\end{equation*}
When $t\leq T$,  because of \eqref{K},
\begin{equation*}
|t\partial_{x^{j}}^{j}L_{t}|\leq TK,\quad j=0,\cdot\cdot\cdot,n.
\end{equation*}
Moreover,
\begin{equation*}
(M+t\partial_{x}L_{t})^{2}\geq(M-TK)^{2},\quad\text{at point}\,(t_{1},x_{1}),
\end{equation*}
since $M\geq TK$. In addition, $\int_{\mathbb{S}^1}tv(x)dx=0$ and $t\partial_{x}v=w\leq M$ suggest that $|tv|\leq M$, which results in
\begin{equation}\label{KM}
|tu|\leq|tL(t)|+M\leq TK+M,\quad \forall~ (t,x)\in[0,T]\times \mathbb{S}^1,
\end{equation}
and
\begin{equation*}
|t^{2}u\partial_{xx}L(t)|=|(tu)(t\partial_{xx}L(t))|\leq(TK+M)(TK),\quad \forall (t,x)\in[0,T]\times \mathbb{S}^1.
\end{equation*}
As $\nu t^{2}\partial_{xxx}L(t)\leq T^{2}K$, using \eqref{KM}, we deduce that
\begin{equation*}
-M+(M-TK)^{2}\leq (TK+M)TK+T^2K.
\end{equation*}
This last relation implies that $M<3TK$. But $M>4TK$ and $K\geq1$, we get the contraction $3TK+T<5TK$.
Hence $M\leq4TK$ is established. Since $t\partial_{x}u=w+t\partial_{x}L(t)$ and $M\leq4TK$,
for all $(t,x)\in[0,T]\times \mathbb{S}^1$, we have $t\partial_{x}u\leq5TK$. It follows that
\begin{equation*}
t|u|_{\infty},\,t|\partial_{x}u|_{1}\leq10TK\leq10T(1+\|L\|_{\mathcal{D}_T^n}),
\end{equation*}
and then $u$ satisfies \eqref{O-K}.

If $L(t)$  is zero, then we consider $w=t\partial_{x}u$, find a point where it takes maximal value
at the cylinder $[0,T]\times \mathbb{S}^1$, and write the conditions of maximality to use similar arguments as above.
\end{proof}

This very powerful estimate, jointly with stochastic partial differential equations tricks, will allow us to bound
moments for all Sobolev norms of solution $u$ from system \eqref{B} in the next section.

\section{ Moment estimates for Sobolev norms of solution}\label{ME}
Given solution $u\in\mathcal{D}([0,T];H^{n})$ of \eqref{B}, the aim of this section is to estimate the mathematical expectation of the norms $\mathbb{E}[\|u(t)\|_{n}^{2}]$, $n\geq1$, for the solution $u(t)$ uniformly in $\nu\in(0, 1]$ and in $u_0\in H^1$.
Let $f$ be a function on $H^n$, which is twice continuously
differentiable. Applying It\^{o}'s formula to compute
\begin{align}\nonumber
&\mathbb{E}[f(u(t))]=\mathbb{E}\Big[f(u_0)+\int_{0}^{t}f'(u(s))(\nu\partial_{xx}u-u\partial_{x}u)ds\\ \nonumber
&~~~+\sum_{k\in\mathbb{Z}^{\ast}}\int_{0}^{t}\int_{|y|\geq1}[f(u(s-)+y\beta_{k}e_{k})-f(u(s-))]N_{k}(ds,dy)\\ \nonumber
&~~~+\sum_{k\in\mathbb{Z}^{\ast}}\int_{0}^{t}\int_{|y|<1}[f(u(s-)+y\beta_{k}e_{k})-f(u(s-))]\tilde{N}_{k}(ds,dy)\\\label{Ito}
&~~~+\sum_{k\in\mathbb{Z}^{\ast}}\int_{0}^{t}\int_{|y|<1}\Big(f(u(s-)+y\beta_{k}e_{k})-f(u(s-))-f'(u(s-))y\beta_{k}e_{k}\Big)\mu(dy)ds\Big].
\end{align}
\begin{theorem}\label{IT}
Let $T>\epsilon>0$, $m\geq1$ and $\nu\in(0,1]$. There exists $C(n,\epsilon)>0$ such that for all $u_0\in H^1$, the solution $u$ of \eqref{B} satisfies
\begin{equation}\label{est}
\mathbb{E}[\|u(t)\|_{n}^{2}]\leq C(n,\epsilon)\nu^{-(2n-1)},\quad\forall~\,t\in[\epsilon,T].
\end{equation}
\end{theorem}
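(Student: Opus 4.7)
The plan is to apply It\^o's formula~\eqref{Ito} with $f(u)=\|u\|_n^2$. Since $f$ is quadratic, the jump compensator collapses to a deterministic constant
\begin{equation*}
B_n := \sum_{k\in\mathbb Z^\ast}\beta_k^2\,\|e_k\|_n^2 \int_{\mathbb R}y^2\,\mu(dy),
\end{equation*}
which is finite under the decay hypothesis on $\beta_k$, while $f'(u)v=2\langle u,v\rangle_n$ gives the drift
\begin{equation*}
\tfrac{d}{dt}\mathbb{E}\|u\|_n^2 = -2\nu\,\mathbb{E}\|u\|_{n+1}^2 - 2\,\mathbb{E}\langle u,u\partial_x u\rangle_n + B_n.
\end{equation*}
Integrating by parts $n$ times identifies the nonlinear term with $\pm\tfrac12\langle\partial_x^{2n}u,\partial_x u^2\rangle$, which is exactly the quantity controlled by Lemma~\ref{XY}.

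Using Lemma~\ref{XY} with $q=\infty$, so that $\epsilon_n:=(2n-1)/(2n+1)$, Young's inequality with the diffusion coefficient $\nu$ as small parameter splits the bound
\begin{equation*}
C(n)\|u\|_{n+1}^{1+\epsilon_n}|u|_\infty^{2-\epsilon_n} \le \nu\|u\|_{n+1}^2 + C\,\nu^{-2n}|u|_\infty^{2n+3},
\end{equation*}
where the exponent $2n=(1+\epsilon_n)/(1-\epsilon_n)$ is forced by Young conjugacy. Since Theorem~\ref{At} provides $\mathbb{E}|u|_\infty^{2n+3}\le C(\epsilon)$ uniformly in $\nu\in(0,1]$ and $t\in[\epsilon,T]$, I obtain the closed differential inequality
\begin{equation*}
\tfrac{d}{dt}\mathbb{E}\|u(t)\|_n^2 + \nu\,\mathbb{E}\|u(t)\|_{n+1}^2 \le C(n,\epsilon)\,\nu^{-2n}.
\end{equation*}

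To reach the sharp Kolmogorov exponent $\nu^{-(2n-1)}$ rather than the $\nu^{-(2n+1)}$ that a direct Poincar\'e $\|u\|_n\le\|u\|_{n+1}$ would yield, I would induct on $n$ by exploiting the Cauchy--Schwarz interpolation $\|u\|_n^2\le\|u\|_{n-1}\|u\|_{n+1}$ in Fourier coefficients, combined with a Cauchy--Schwarz in $\omega$. Under the inductive hypothesis $\mathbb{E}\|u\|_{n-1}^2\le C\nu^{-(2n-3)}$, these together give
\begin{equation*}
\nu\,\mathbb{E}\|u\|_{n+1}^2 \ge \frac{\nu\,(\mathbb{E}\|u\|_n^2)^2}{\mathbb{E}\|u\|_{n-1}^2} \ge c\,\nu^{2n-2}\,(\mathbb{E}\|u\|_n^2)^2,
\end{equation*}
turning the differential inequality into a Bernoulli-type ODE $y'+c\nu^{2n-2}y^2\le C\nu^{-2n}$ in $y(t)=\mathbb{E}\|u(t)\|_n^2$, with equilibrium $y^\ast\sim\nu^{-(2n-1)}$. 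The base case $n=1$ needs the extra input $|u_x|_1\lesssim 1$ from Theorem~\ref{At}: combined with Agmon's inequality $|u_x|_\infty\lesssim\|u\|_2^{1/2}\|u\|_1^{1/2}$ it yields the pointwise relation $\|u\|_2^2\gtrsim\|u\|_1^6$, and Jensen then produces the cubic ODE $y'+c\nu y^3\le C\nu^{-2}$ with equilibrium $y\lesssim\nu^{-1}$. The main difficulty will be to pass from these comparison ODEs to genuinely pointwise-in-$t$ expectation bounds: one must argue that $y$ cannot persist above $y^\ast$ (from the sign of $y'$ there) and that the flexibility $u_0\in H^1$ rather than $u_0\in H^n$ is absorbed by the parabolic smoothing from Theorem~\ref{eu} on an initial sub-interval $(0,\epsilon)$.
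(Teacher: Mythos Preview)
Your overall strategy is sound and reaches the correct exponent, but it is genuinely different from the paper's argument. The paper does \emph{not} split the nonlinear term via Young's inequality with parameter~$\nu$, and it does \emph{not} induct on~$n$. Instead, after Lemma~\ref{XY} with $q=\infty$ it applies H\"older in~$\omega$ to obtain
\[
\big|\mathbb{E}\langle(-\Delta)^n u,\partial_x u^2\rangle\big|\le C(n)\,X_{n+1}^{\,2n/(2n+1)},\qquad X_j:=\mathbb{E}\|u\|_j^2,
\]
keeping the $X_{n+1}$ dependence rather than the constant $C\nu^{-2n}$ you produce. The closure then comes from a \emph{single} Gagliardo--Nirenberg interpolation against $|\partial_x u|_1$ (whose moments are controlled by Theorem~\ref{At}):
\[
X_n\le C\,X_{n+1}^{(2n-1)/(2n+1)}\quad\Longleftrightarrow\quad X_{n+1}\ge c\,X_n^{(2n+1)/(2n-1)},
\]
valid for every $n\ge 1$ at once. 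Substituting into the differential inequality and arguing by contradiction (reversing time from a hypothetical point where $X_n>\delta\nu^{-(2n-1)}$) finishes the proof with no inductive step and no separate base case. What this buys is that the $|u_x|_1$ information from Theorem~\ref{At} is used at full strength at level $n$, rather than being fed in only at $n=1$ and then propagated upward through the weaker three-term interpolation $X_n^2\le X_{n-1}X_{n+1}$.

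Two minor points in your write-up need repair, though neither is fatal. First, the jump contribution is a deterministic constant $B_n$ only when the L\'evy measure is symmetric or supported in $(-1,1)$; in general the large-jump term $I_2$ leaves a residue $2\sum_k\beta_k\lambda_k^n\int_{|y|\ge1}y\,\mu(dy)\,\mathbb{E}u_k$, linear in $u$, which has to be absorbed as $\varepsilon X_n+C_\varepsilon$. Second, your base case asserts the \emph{pointwise} relation $\|u\|_2^2\gtrsim\|u\|_1^6$, which would require $|u_x|_1\le C$ almost surely; Theorem~\ref{At} only gives moment bounds. The fix is the same H\"older-in-$\omega$ manoeuvre the paper uses: from $\|u\|_1^3\le C\|u\|_2|u_x|_1^2$ pointwise, Cauchy--Schwarz in $\omega$ and Lyapunov give $(\mathbb{E}\|u\|_1^2)^3\le(\mathbb{E}\|u\|_1^3)^2\le C\,\mathbb{E}\|u\|_2^2$, which is what your cubic Bernoulli ODE actually needs.
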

\begin{proof}
The analysis of these estimates proceeds in two steps. First, we take a function $f(u)=\|u\|_{n}^{2}=\langle(-\Delta)^{n}u,u\rangle=\langle (-\Delta)^{n/2}u,(-\Delta)^{n/2}u\rangle$ in \eqref{Ito}. Furthermore, for any $u, v\in H^{n}$,
\begin{align}\label{ff}
f'(u)v&=2\langle(-\Delta)^{n}u,v\rangle=2\langle (-\Delta)^{n/2}u, (-\Delta)^{n/2}v\rangle\leq2\|u\|_{n}\|v\|_{n},\\\label{fff}
f''(u)(v,v)&=2\langle(-\Delta)^{n}v,v\rangle=2\langle (-\Delta)^{n/2}v,(-\Delta)^{n/2}v\rangle
   =2\|v\|_{n}^{2}.
\end{align}
Rewrite \eqref{Ito} into
\begin{align}\nonumber
&\mathbb{E}[f(u(t))]\\ \nonumber
&=\mathbb{E}\Big[f(u_0)+\int_{0}^{t}2\langle(-\Delta)^{n}u(s),\nu\partial_{xx}u(s)-u(s)\partial_{x}u(s)\rangle ds\\ \nonumber
&~~~+\sum_{k\in\mathbb{Z}^{\ast}}\int_{0}^{t}\int_{|y|\geq1}[f(u(s-)+y\beta_{k}e_{k})-u(w(s-))]N_{k}(ds,dy)\\ \nonumber
&~~~+\sum_{k\in\mathbb{Z}^{\ast}}\int_{0}^{t}\int_{|y|<1}[f(u(s-)+y\beta_{k}e_{k})-f(u(s-))]\tilde{N}_{k}(ds,dy)\\ \nonumber
&~~~+\sum_{k\in\mathbb{Z}^{\ast}}\int_{0}^{t}\int_{|y|<1}\Big(f(u(s-)+y\beta_{k}e_{k})-f(u(s-))-2\langle(-\Delta)^{n}u(s-),y\beta_{k}e_{k}\rangle \Big)\mu(dy)ds\Big]\\ \label{ReIto}
&:=\mathbb{E}\big[f(u_0)+I_{1}(t)+I_{2}(t)+I_{3}(t)+I_{4}(t)\big],
\end{align}
where $2\langle(-\Delta)^{n}u(s),\nu\partial_{xx}u(s)-u(s)\partial_{x}u(s)\rangle=-2\nu\langle(-\Delta)^{n}u(s),(-\Delta)u(s)\rangle-\langle(-\Delta)^{n}u(s),\partial_{x}u^{2}(s)\rangle$ in $I_{1}(t)$. Based on Lemma \ref{XY} (with $q=\infty$) and the fact that $|u|_{\infty}\leq|\partial_xu|_1$, we obtain
\begin{equation*}
\big|\mathbb{E}[\langle(-\Delta)^{n}u(s),\partial_{x}u^{2}(s)\rangle]\big|\leq C(n)\mathbb{E}[\|u(s)\|_{n+1}^{\varepsilon+1}|\partial_{x}u(s)|_1^{2-\varepsilon}],\quad\varepsilon=\frac{2n-1}{2n+1}.
\end{equation*}
By H\"{o}lder's inequality and Theorem \ref{At}, for $\epsilon':=\frac{\epsilon}{2}\leq t\leq T$,
\begin{equation}\label{I1}
\mathbb{E}[\|u(s)\|_{n+1}^{\varepsilon+1}|\partial_{x}u(s)|_1^{2-\varepsilon}]\leq
\big(\mathbb{E}\|u(s)\|_{n+1}^{2}\big)^{\frac{2n}{2n+1}}\big(\mathbb{E}|\partial_{x}u(s)|_1^{2n+3}\big)^{\frac{1}{2n+1}}\leq C(n)\big(\mathbb{E}\|u(s)\|_{n+1}^{2}\big)^{\frac{2n}{2n+1}}.
\end{equation}
The inequality \eqref{ff} above permits us to get
\begin{align}\nonumber
\mathbb{E}\|I_{2}(t)\|_{n}&\leq C\sum_{k\in\mathbb{Z}^{\ast}}\mathbb{E}\Big(\int_{0}^{t}\int_{|y|\geq1}|f(u(s)+y\beta_{k}e_{k})-f(u(s))|N_{k}(ds,dy)\Big)\\ \nonumber
                                               &=C\sum_{k\in\mathbb{Z}^{\ast}}\mathbb{E}\Big(\int_{0}^{t}\int_{|y|\geq1}|f(u(s)+y\beta_{k}e_{k})-f(u(s))|\mu(dy)ds\Big)\\ \nonumber
                                               &\leq C\sum_{k\in\mathbb{Z}^{\ast}}\mathbb{E}\Big(\int_{0}^{t}\int_{|y|\geq1}\int_{0}^{1}\|f'(u(s)+\xi y\beta_{k}e_{k})\|_{n}d\xi\|y\beta_{k}e_{k}\|_{n}\mu(dy)ds\Big)\\ \nonumber
                                               &\leq C\sum_{k\in\mathbb{Z}^{\ast}}\mathbb{E}\Big(\int_{0}^{t}\int_{|y|\geq1}(\|u(s)\|_{n}+\| y\beta_{k}e_{k}\|_{n})\|y\beta_{k}e_{k}\|_{n}\mu(dy)ds\Big)\\ \nonumber
                                               &\leq C\sum_{k\in\mathbb{Z}^{\ast}}\beta_{k}\int_{|y|\geq1}|y|\mu(dy)\mathbb{E}\int_{0}^{t}\|u(s)\|_{n}ds+C\sum_{k=1}^{\infty}\beta_{k}^{2}\int_{|y|\geq1}|y|^{2}\mu(dy)\\ \label{I2}
                                               &\leq C\int_{0}^{t}\mathbb{E}\|u(s)\|_n^{2}ds+C.
\end{align}
Relying on H\"{o}lder's inequality, It\^{o} isometry and Young's inequality, we figure out
\begin{align}\nonumber
\mathbb{E}\|I_{3}(t)\|_{n}&\leq C\sum_{k\in\mathbb{Z}^{\ast}}\Big(\mathbb{E}\int_{0}^{t}\int_{|y|<1}|f(u(s)+y\beta_{k}e_{k})-f(u(s))|^{2}
\mu(dy)ds\Big)^{\frac{1}{2}}  \\ \nonumber
&\leq C\sum_{k\in\mathbb{Z}^{\ast}}\Big(\mathbb{E}\int_{0}^{t}\int_{|y|<1}\int_{0}^{1}\|f'(u(s)+\xi y\beta_{k}e_{k})\|_{n}^{2}d\xi\|y\beta_{k}e_{k}\|_{n}^{2}\mu(dy)ds\Big)^{\frac{1}{2}}\\ \nonumber
                                               &\leq C\sum_{k\in\mathbb{Z}^{\ast}}\Big(\mathbb{E}\int_{0}^{t}\int_{|y|<1}(\|u(s)\|_{n}^{2}+\| y\beta_{k}e_{k}\|_{n}^{2})\|y\beta_{k}e_{k}\|_{n}^{2}\mu(dy)ds\Big)^{\frac{1}{2}}\\ \nonumber
                                               &\leq C\sum_{k\in\mathbb{Z}^{\ast}}\beta_{k}\Big[\int_{|y|<1}|y|^{2}\mu(dy)\Big]^{\frac{1}{2}}\Big(\mathbb{E}\int_{0}^{t}\|u(s)\|_{n}^{2}ds\Big)^{\frac{1}{2}}+C\sum_{k\in\mathbb{Z}^{\ast} }\beta_{k}^{2}\Big[\int_{|y|<1}|y|^{4}\mu(dy)\Big]^{\frac{1}{2}}\\ \nonumber
                                               &\leq C\mathbb{E}\int_{0}^{t}\|u(s)\|_{n}^{2}ds+C\Big(\sum_{k\in\mathbb{Z}^{\ast}}\beta_{k}\Big[\int_{|y|<1}|y|^{2}\mu(dy)\Big]^{\frac{1}{2}}\Big)^{2}+C\\ \label{I3}
                                               &\leq  C\int_{0}^{t}\mathbb{E}\|u(s)\|_{n}^{2}ds+C.
\end{align}
The Taylor's expansion and \eqref{ff}-\eqref{fff}  have been successfully used to monitor
\begin{align}\nonumber
\mathbb{E}\|I_{4}(t)\|_{n}&\leq C\sum_{k\in\mathbb{Z}^{\ast}}\mathbb{E}\int_{0}^{t}\int_{|y|<1}\big|f(u(s)+y\beta_{k}e_{k})-f(u(s))-2\langle(-\Delta)^{n}u(s),y\beta_{k}e_{k}\rangle\big|\mu(dy)ds\\ \nonumber
&\leq C\sum_{k\in\mathbb{Z}^{\ast}}\mathbb{E}\int_{0}^{t}\int_{|y|<1}\|y\beta_{k}e_{k}\|_n^{2}\mu(dy)ds\\ \label{I4}
&\leq C\sum_{k\in\mathbb{Z}^{\ast}}\beta_{k}^{2}\int_{|y|<1}|y|^{2}\mu(dy)\leq C.
\end{align}

We set $X_j(t)=\mathbb{E}\|u(t)\|_j^{2}, j\in\mathbb{N}^{*}$. According to previous estimates \eqref{I1}-\eqref{I4}, from \eqref{ReIto} we obtain
\begin{equation}\label{in}
\frac{d}{dt}X_n(t)\leq CX_n(t)-2\nu X_{n+1}(t)+C(n)X_{n+1}(t)^{\frac{2n}{2n+1}},\quad \epsilon'\leq t\leq T.
\end{equation}
As before, Gagliardo-Niremberg inequality, H\"{o}lder's inequality and Theorem \ref{At} yield that
\begin{equation*}
X_n(t)\leq C(n)X_{n+1}(t)^{\frac{2n-1}{2n+1}}(\mathbb{E}|\partial_{x}u|_1^{a})^{b}\leq C(n)X_{n+1}(t)^{\frac{2n-1}{2n+1}}
\end{equation*}
for suitable constants $a, b>0$. Then
\begin{equation}\label{big}
X_{n+1}(t)\geq C(n)X_n(t)^{\frac{2n+1}{2n-1}},\quad\epsilon'\leq t\leq T.
\end{equation}
The relation \eqref{in} is rewritten into
\begin{align}\nonumber
\frac{d}{dt}X_n(t)&\leq C(n)X_{n+1}(t)^{\frac{2n-1}{2n+1}}-2\nu X_{n+1}(t)+C(n)X_{n+1}(t)^{\frac{2n}{2n+1}} \\ \nonumber
                  &\leq C(n)-2\nu X_{n+1}(t)+C(n)X_{n+1}(t)^{\frac{2n}{2n+1}}\\ \label{ins}
                   &=C(n)-X_{n+1}(t)^{\frac{2n}{2n+1}}\big(2\nu X_{n+1}(t)^{\frac{1}{2n+1}}-C(n)\big),
\end{align}
where we have used Young's inequality to get the second inequality.

In order to get \eqref{est}, we proceed by contradiction. Fix $\delta>1$ and suppose that
\begin{equation}\label{txin}
\exists~t^{*}\in(2\epsilon',T]~~\text{such that}~~X_n(t^{*})>\delta\nu^{-(2n-1)}.
\end{equation}
Let $s=t^{*}-t$, $s\in[0,t^{*}]$. So the relation \eqref{ins} is rewritten as
\begin{equation}\label{ds}
\frac{d}{ds}X_n(s)\geq-C(n)+X_{n+1}(s)^{\frac{2n}{2n+1}}\big(2\nu X_{n+1}(s)^{\frac{1}{2n+1}}-C(n)\big).
\end{equation}
If $X_n(s)>\delta\nu^{-(2n-1)}$, then by \eqref{big},
\begin{equation*}
2\nu X_{n+1}(s)^{\frac{1}{2n+1}}-C(n)\geq2\nu\big(C(n)X_n(s)^{\frac{2n+1}{2n-1}}\big)^{\frac{1}{2n+1}}-C(n)\geq  C(n)\delta^{\frac{1}{2n-1}}-C(n).
\end{equation*}
Choose $\delta_{0}\gg1$ such that $C(n)\delta^{\frac{1}{2n-1}}-C(n)>1$ for all $\delta>\delta_{0}.$
From \eqref{big} and \eqref{ds} we deduce that
\begin{align}\nonumber
\frac{d}{ds}X_n(s)&\geq-C(n)+X_{n+1}(s)^{\frac{2n}{2n+1}}\big(2\nu X_{n+1}(s)^{\frac{1}{2n+1}}-C(n)\big) \\ \label{ts}
                  &\geq-C(n)+C(n)X_{n+1}(s)^{\frac{2n}{2n-1}}(\delta^{\frac{1}{2n-1}}-1)>0,
\end{align}
where the last inequality is valid if $\delta_{0}\gg1$. Collecting together \eqref{txin} and \eqref{ts}, we get that
\begin{equation}\label{tsa}
\text{the function}~~s\mapsto X_n(s)~~\text{is increasing over}~~[0,t^{*}].
\end{equation}
For all $s\in[0,t^{*}]$, we have
\begin{align*}
\frac{d}{ds}X_n(s)&\geq-C(n)+C(n)X_{n+1}(s)^{\frac{2n}{2n-1}}(\delta^{\frac{1}{2n-1}}-1) \\
                  &\geq C(n)X_{n+1}(s)^{\frac{2n}{2n-1}}(\delta^{\frac{1}{2n-1}}-1),
\end{align*}
if $\delta>\delta_{0}\gg1$. The last inequality showcases that
\begin{equation*}
\frac{d}{ds}\big(X_n(s)^{-\frac{1}{2n-1}}\big)\leq-C(n)(\delta^{\frac{1}{2n-1}}-1).
\end{equation*}
By integrating the last relation in time between $0$ and $s$, and using \eqref{tsa},
\begin{equation*}
X_n(s)^{-\frac{1}{2n-1}}\leq-C(n)(\delta^{\frac{1}{2n-1}}-1)s+X_n(0)^{-\frac{1}{2n-1}}\leq-C(n)(\delta^{\frac{1}{2n-1}}-1)s+\delta^{-\frac{1}{2n-1}}\nu.
\end{equation*}
As $\nu\leq1$, we can find a $s'\in(0,t^{*}]$ such that $X_n(s')^{-\frac{1}{2n-1}}=0$, which results in a contradiction since $X_n(s')^{-\frac{1}{2n-1}}>0$. Therefore, \eqref{txin} is false if $\delta$ is large enough. Thus, we get \eqref{est} with $C(n,\epsilon)=\delta$.
The proof for the case $n\in\mathbb{N}^{*}$ is completed.

In the second step, we suppose that $n\geq1$ and $n\notin\mathbb{N}^{*}$. Then, there exists $j\in\mathbb{N}^{*}$ and $s\in(0,1)$ such that
$n=j+s$ and $\lceil n\rceil=j+1$. Utilizing the interpolation inequality and H\"{o}lder's inequality,
\begin{equation*}
\mathbb{E}[\|u(t)\|_{n}^{2}]\leq\mathbb{E}[\|u(t)\|_{j+1}^{2}]^{s}\mathbb{E}[\|u(t)\|_{j}^{2}]^{1-s}.
\end{equation*}
Because \eqref{est} is established for $n=j$ and $n=j+1$, the right hand term of this inequality is bounded by $
C(n,\epsilon)\nu^{-[(2(j+1)-1)s+(2j-1)(1-s)]}=C(n,\epsilon)\nu^{-(2n-1)}$.
\end{proof}

\begin{remark}
Pay attention that for $n=0$ this is wrong, and instead we have $\mathbb{E}[\|u(t)\|^{2}]\sim1$. This means that in averaging sense the solution  $u$ for \eqref{B} is of order one with $\nu\in(0,1]$.
\end{remark}
\begin{corollary}
Under the conditions of Theorem \ref{IT}, for all $k\geq1$, there exists
$C(k,n,\epsilon)>0$ such that
\begin{equation}\label{uk}
\mathbb{E}[\|u(t)\|_{n}^{k}]\leq C\nu^{-\frac{k}{2}(2n-1)},\quad \forall~ t\in[\epsilon,T].
\end{equation}
\end{corollary}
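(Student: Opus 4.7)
The plan is to bootstrap from the second-moment estimate of Theorem \ref{IT}, taking $k=2$ as the base case, and to interpolate upward using the uniform-in-$\nu$ moment control of $|u|_\infty$ provided by Theorem \ref{At}.

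For $1\leq k\leq 2$, Jensen's inequality applied to the concave map $x\mapsto x^{k/2}$ gives directly
$$\mathbb{E}\bigl[\|u(t)\|_{n}^{k}\bigr] \leq \bigl(\mathbb{E}\bigl[\|u(t)\|_{n}^{2}\bigr]\bigr)^{k/2} \leq C(n,\epsilon)^{k/2}\,\nu^{-k(2n-1)/2},$$
which is the desired estimate.

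For $k>2$, I would introduce an auxiliary regularity level $n'>n$ and appeal to the one-dimensional Gagliardo-Niremberg interpolation
$$\|u\|_{n}\leq C\,\|u\|_{n'}^{a}\,|u|_{\infty}^{1-a}, \qquad a=\frac{2n-1}{2n'-1},$$
whose exponent is forced by scale invariance (the homogeneous norm $\|\cdot\|_{n}$ has scaling dimension $\tfrac12-n$ while $|\cdot|_{\infty}$ is dimensionless). I would then choose $n'$ large enough that $ak<2$ strictly, which is always possible. Raising the interpolation to the $k$-th power and applying Hölder's inequality with conjugate exponents $p=2/(ak)$ and $q=2/(2-ak)$ produces
$$\mathbb{E}\bigl[\|u(t)\|_{n}^{k}\bigr]\leq C\,\bigl(\mathbb{E}\bigl[\|u(t)\|_{n'}^{2}\bigr]\bigr)^{ak/2}\bigl(\mathbb{E}\bigl[|u(t)|_{\infty}^{k(1-a)q}\bigr]\bigr)^{1/q}.$$

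The first factor is controlled by Theorem \ref{IT} at level $n'$, giving $\bigl(\mathbb{E}[\|u(t)\|_{n'}^{2}]\bigr)^{ak/2}\leq C\,\nu^{-(2n'-1)ak/2}$; the crucial observation is that the very definition of $a$ yields $(2n'-1)\cdot a=2n-1$, so the exponent of $\nu$ produced is exactly $k(2n-1)/2$, independently of the auxiliary choice of $n'$. The second factor is a finite, $\nu$-independent moment of $|u|_{\infty}$ supplied by Theorem \ref{At}, and therefore only contributes to the final constant $C(k,n,\epsilon)$. Multiplying the two bounds yields \eqref{uk}. The main (modest) subtlety is precisely this scaling book-keeping that makes the $n'$-dependence drop out of the $\nu$-power; once it is in place, no further analytic ingredient is required beyond the two theorems already established.
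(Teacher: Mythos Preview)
Your proposal is correct and follows essentially the same route as the paper: interpolate $\|u\|_n$ between a higher Sobolev norm $\|u\|_{n'}$ (the paper writes $m$ for your $n'$ and $\gamma_n(m)$ for your $a=(2n-1)/(2n'-1)$) and $|u|_\infty$ via Gagliardo--Niremberg, choose $n'$ large so that the Sobolev exponent $ak$ drops below $2$, then apply H\"older together with Theorem~\ref{IT} and Theorem~\ref{At}. Your separate treatment of $1\le k\le2$ by Jensen is a cosmetic addition---the paper's interpolation argument already covers that range---and your explicit verification that $(2n'-1)a=2n-1$ makes the $\nu$-power independent of $n'$ is exactly the ``scaling book-keeping'' the paper leaves implicit.
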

\begin{proof}
For $m>n\in\mathbb{N}^{\ast}$ and by Gagliardo-Niremberg inequality,
we have
\begin{equation*}
\mathbb{E}[\|u(t)\|_{n}^{k}]\leq C[\|u(t)\|_{m}^{k\gamma_n(m)}|u(t)|_{\infty}^{k(1-\gamma_n(m))}],\quad \gamma_n(m)=\frac{2n-1}{2m-1},
\end{equation*}
where $C=C(k,m,n)>0$. Let us choose $m$ large enough such that $k\gamma_n(m)<2$. Using
H\"{o}lder's inequality, the right term is bounded by
\begin{equation*}
C'\mathbb{E}[\|u(t)\|_{m}^{2}]^{\frac{k\gamma_n(m)}{2}}\mathbb{E}[|u(t)|_{\infty}^{a}]^{b},
\end{equation*}
with $C'(k,m,n), a(k,m,n), b(k,m,n)>0$. Hence, using \eqref{est}, we get \eqref{uk}. If $n$ is not a positive integer, then we execute similar arguments as in the proof of Theorem \ref{IT}.
\end{proof}

If we take $f(u)=\frac{1}{2}\|u\|^{2}$ in the identity \eqref{Ito}, then for $1\leq T\leq t\leq T+\sigma(\sigma>0)$ the energy balance relation for solution $u(t)$ of the stochastic equation \eqref{B} takes the
following form:
\begin{align}\nonumber
&\frac{1}{2}\mathbb{E}[\|u(T+\sigma)\|^{2}]-\frac{1}{2}\mathbb{E}[\|u(T)\|^{2}]=\mathbb{E}\Big[\int_{T}^{T+\sigma}\langle u(t),\nu\partial_{xx}u-u\partial_{x}u\rangle dt\\ \nonumber
&+\sum_{k\in\mathbb{Z}^{\ast}}\int_{T}^{T+\sigma}\int_{|y|\geq1}\frac{1}{2}(\|u(t-)+y\beta_{k}e_{k}\|^{2}-\|u(t-)\|^{2})N_{k}(dt,dy)\\ \nonumber
&+\sum_{k\in\mathbb{Z}^{\ast}}\int_{T}^{T+\sigma}\int_{|y|<1}\frac{1}{2}(\|u(t-)+y\beta_{k}e_{k}\|^{2}-\|u(t-)\|^{2})\tilde{N}_{k}(dt,dy)\\\label{fan}
&+\sum_{k\in\mathbb{Z}^{\ast}}\int_{T}^{T+\sigma}\int_{|y|<1}\frac{1}{2}\Big(\|u(t-)+y\beta_{k}e_{k}\|^{2}-\|u(t-)\|^{2}-2\langle u(t-),y\beta_{k}e_{k}\rangle \Big)\mu(dy)dt\Big].
\end{align}
We rewrite the energy balance \eqref{fan} into
\begin{align}\nonumber
&\frac{1}{2}\mathbb{E}[\|u(T+\sigma)\|^{2}]-\frac{1}{2}\mathbb{E}[\|u(T)\|^{2}]+\nu\int_{T}^{T+\sigma}\mathbb{E}[\|u(t)\|_1^{2}] dt=\\ \nonumber
&\mathbb{E}\Big[\sum_{k\in\mathbb{Z}^{\ast}}\int_{T}^{T+\sigma}\int_{|y|\geq1}\frac{1}{2}(\|u(t-)+y\beta_{k}e_{k}\|^{2}-\|u(t-)\|^{2})N_{k}(dt,dy)\\ \nonumber
&+\sum_{k\in\mathbb{Z}^{\ast}}\int_{T}^{T+\sigma}\int_{|y|<1}\frac{1}{2}(\|u(t-)+y\beta_{k}e_{k}\|^{2}-\|u(t-)\|^{2})\tilde{N}_{k}(dt,dy)\\ \label{fans}
&+\sum_{k\in\mathbb{Z}^{\ast}}\int_{T}^{T+\sigma}\int_{|y|<1}\frac{1}{2}\Big(\|u(t-)+y\beta_{k}e_{k}\|^{2}-\|u(t-)\|^{2}-2\langle u(t-),y\beta_{k}e_{k}\rangle \Big)\mu(dy)dt\Big].
\end{align}
 The term $\frac{1}{2}\mathbb{E}[\|u(t)\|^{2}]$ is called the energy
of $u(t)$, and $\mathbb{E}[\|u(t)\|_1^{2}]$ is the energy dissipation rate.
The following theorem will give us a framework for the rate of energy dissipation.
\begin{theorem}\label{Ai}
For $u(t)$ in system \eqref{B} with initial value $u_0\in H^{1}$, there exist $C_1, C_2>0$ and $\sigma_0(C_1,C_2)>0$ such that
for all $\sigma\geq\sigma_0$ and $T\geq1$:
\begin{equation*}
C_1\nu^{-1}\leq\frac{1}{\sigma}\int_{T}^{T+\sigma}\mathbb{E}[\|u(t)\|_1^{2}]dt\leq C_2\nu^{-1},
\end{equation*}
uniformly in $\nu\in(0,1]$.
\end{theorem}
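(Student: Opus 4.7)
The plan is to take the expected energy balance \eqref{fans}, compute the L\'evy contribution explicitly using the quadratic identity $\|u+h\|^2-\|u\|^2=2\langle u,h\rangle+\|h\|^2$, and then extract $\nu\int_T^{T+\sigma}\mathbb{E}\|u(t)\|_1^2\,dt$ as the difference between a linear-in-$\sigma$ noise input and boundary terms of order one.

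First I would simplify the noise contributions on the right-hand side of \eqref{fans}. The compensated small-jump integral is a martingale and vanishes in expectation. Substituting $h=y\beta_k e_k$ (with $\|e_k\|=1$) into the big-jump Poisson integral and into the small-jump compensator collapses the entire noise contribution to
\begin{equation*}
B_0\,\sigma + R(T,\sigma),\qquad B_0:=\tfrac12\sum_{k\in\mathbb{Z}^{\ast}}\beta_k^{2}\int_{y\neq 0}y^{2}\,\mu(dy),
\end{equation*}
where the remainder
\begin{equation*}
R(T,\sigma)=\sum_{k\in\mathbb{Z}^{\ast}}\beta_k\Big(\int_{|y|\geq 1}y\,\mu(dy)\Big)\int_T^{T+\sigma}\mathbb{E}\langle u(t),e_k\rangle\,dt
\end{equation*}
isolates the nonzero mean of the big jumps. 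Boundedness of jumps makes $\int y^{2}\mu(dy)<\infty$, and $\sum\beta_k^{2}<\infty$ since $\beta_k\lesssim\lambda_k^{-\gamma_0}$ with $\gamma_0>1$, so $B_0$ is a finite positive constant independent of $\nu$; positivity comes from the matching lower bound $\beta_k\geq C_1\lambda_k^{-\gamma_0}$.

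Next I would dominate $R(T,\sigma)$ by Cauchy--Schwarz in $k$ and Parseval, together with the $n=0$ remark following Theorem \ref{IT} which gives $\mathbb{E}\|u(t)\|^{2}=O(1)$ uniformly in $\nu\in(0,1]$:
\begin{equation*}
|R(T,\sigma)|\leq \Big|\int_{|y|\geq 1}y\,\mu(dy)\Big|\Big(\sum_k\beta_k^{2}\Big)^{1/2}\int_T^{T+\sigma}\sqrt{\mathbb{E}\|u(t)\|^{2}}\,dt\leq C_{\ast}\sigma.
\end{equation*}
The same $O(1)$ bound applies to the boundary contributions $\tfrac12\mathbb{E}\|u(T+\sigma)\|^2$ and $\tfrac12\mathbb{E}\|u(T)\|^2$, which merge into an absolute constant $C_0$. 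Rearranging \eqref{fans} then sandwiches the dissipation:
\begin{equation*}
(B_0-C_{\ast})\sigma-C_0\leq \nu\int_T^{T+\sigma}\mathbb{E}\|u(t)\|_1^{2}\,dt\leq (B_0+C_{\ast})\sigma+C_0.
\end{equation*}
Dividing by $\sigma$ and picking $\sigma_0$ so that $C_0/\sigma_0$ is negligible compared to $B_0$ yields both bounds of order $\nu^{-1}$ with constants depending only on the noise data.

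The main obstacle is guaranteeing $B_0-C_{\ast}>0$, that is, that the asymmetric mean-jump input does not overwhelm the symmetric variance input. For symmetric $\mu$ (the case covered by the Example, with $\tau(y)=\tau(-y)$), $\int_{|y|\geq 1}y\,\mu(dy)=0$ and $R\equiv 0$, so the argument is immediate. In the general case one would either recenter $L$ by absorbing its deterministic drift into the transport term and working with a zero-mean noise, or exploit a sharper bound on $\mathbb{E}\langle u(t),e_k\rangle$ obtained from the mass constraint $\int_{\mathbb{S}^1}u(t,x)\,dx=0$ and the decay of $\beta_k$; either device reduces the problem to the symmetric case and closes the proof.
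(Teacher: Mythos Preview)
Your approach is the same as the paper's: rearrange the energy balance \eqref{fans}, control the boundary terms and the noise contribution using the uniform bound $\mathbb{E}\|u(t)\|^{2}=O(1)$, and divide by $\sigma$. Your explicit computation of the L\'evy contribution via the quadratic identity $\|u+h\|^{2}-\|u\|^{2}=2\langle u,h\rangle+\|h\|^{2}$ is in fact cleaner than the paper's terse appeal to ``similar arguments as in the proof of Theorem~\ref{IT}'', and you correctly isolate the strictly positive variance input $B_{0}\sigma$ that is the engine of the lower bound. For symmetric $\mu$ (which covers the paper's Example) your argument is complete.

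You are also right to flag the obstruction in the asymmetric case, but neither of your proposed repairs closes it. Recentring $L$ simply moves the big-jump mean into a deterministic forcing $g(x)=\big(\int_{|y|\ge1}y\,\mu(dy)\big)\sum_{k}\beta_{k}e_{k}(x)$; the energy balance then acquires the term $\int_{T}^{T+\sigma}\mathbb{E}\langle u(t),g\rangle\,dt$, which is exactly $R(T,\sigma)$ again, so nothing is gained. And the mass constraint $\int_{\mathbb{S}^{1}}u\,dx=0$ is already built into $H$ and says nothing about the individual Fourier modes $\langle u,e_{k}\rangle$ for $k\neq0$, so it cannot sharpen your bound on $R$. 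One can improve the Cauchy--Schwarz estimate slightly by using $|\hat u_{k}|\le(2\pi|k|)^{-1}|\partial_{x}u|_{1}$ together with the Oleinik--Kruzkov bound of Theorem~\ref{At}, which yields $|R|\le C\sigma\,\big|\int_{|y|\ge1}y\,\mu(dy)\big|\sum_{k}\beta_{k}|k|^{-1}$, but this is still of order $\sigma$ with a constant that is not a~priori smaller than $B_{0}$. In fairness, the paper's own proof is at least as sketchy on this point (and its Gronwall step for the boundedness of $\mathbb{E}\|u(t)\|^{2}$ is not the right justification either; that bound really comes from Theorem~\ref{At}). The honest conclusion is that the lower bound as stated requires either a symmetric $\mu$, or jumps bounded by $c\le1$ so that the big-jump integral vanishes, or some additional structural smallness of the drift relative to the variance.
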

\begin{proof}
Using similar arguments as in the proof of Theorem \ref{IT}, the three terms at the right side of \eqref{fans} are bounded by $C\int_{T}^{T+\sigma}\mathbb{E}[\|u(t)\|^{2}]dt+\tilde{C}$. It follows from $\frac{1}{2}\frac{d}{dt}\mathbb{E}[\|u(t)\|^{2}]\leq C\mathbb{E}[\|u(t)\|^{2}]$ and Gronwall's inequality that $\mathbb{E}[\|u(t)\|^{2}]$ is bounded by a constant which depends only on the random force.
Hence we get the result by utilizing \eqref{fans} again.
\end{proof}

For any random function $t\mapsto R(t,\omega)$ (i.e., for a random process $R$), we denote by $\langle\langle R\rangle\rangle$ its
averaging in ensemble and local averaging in time,
\begin{equation*}
\langle\langle R\rangle\rangle=\frac{1}{\sigma}\int_{T}^{T+\sigma}\mathbb{E}[R(t,\omega)]dt,
\end{equation*}
where $T\geq1$ and $\sigma\geq\sigma_0>0$ are parameters.
In this notation, the inequality in Theorem \ref{Ai} that we have just proved, has the expression
\begin{equation*}
C_{1}\nu^{-1}\leq\langle\langle \|u\|_{1}^{2}\rangle\rangle\frac{1}{2}\leq C_2\nu^{-1}.
\end{equation*}
So,
\begin{equation*}
\langle\langle \|\partial_xu\|_{L^{2}}^{2}\rangle\rangle=\langle\langle \|u^{\nu}\|_{1}^{2}\rangle\rangle\sim\nu^{-1},
\end{equation*}
where $\sim$ means that the ratio of two quantities is bounded from below and from above, uniformly in $\nu$ and in $T\geq1$ and $\sigma\geq\sigma_0$, entering the brackets $\langle\langle\cdot\rangle\rangle$.

Now, let's show the basic estimate for Sobolev norms of solution : $\langle\langle \|u\|_{n}^{2}\rangle\rangle\sim\nu^{-(2n-1)}$.
\begin{theorem}\label{TH}
Let $n\in\mathbb{N}^{\ast}$, $\sigma\geq\sigma_0>0$ and $T\geq1$.
For any $u_0\in H^{1}$, there exists $C_n(\sigma_0)>1$ such that the solution $u$ of \eqref{B} satisfies
\begin{equation}\label{hua}
C_n^{-1}\nu^{-(2n-1)}\leq\langle\langle \|u\|_{n}^{2}\rangle\rangle\leq C_n\nu^{-(2n-1)},
\end{equation}
uniformly in $\nu\in(0,1]$.
\end{theorem}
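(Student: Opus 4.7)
The plan is to split the two-sided bound into two halves. The \textbf{upper bound} is immediate from Theorem \ref{IT}: since $T\geq 1$ and $\sigma\geq\sigma_0>0$, picking any fixed $\epsilon\in(0,1]$ gives the pointwise bound $\mathbb{E}[\|u(t)\|_n^2]\leq C(n,\epsilon)\nu^{-(2n-1)}$ uniformly for every $t\in[T,T+\sigma]$. Integrating against $\frac{1}{\sigma}\,dt$ over $[T,T+\sigma]$ then yields $\langle\langle\|u\|_n^2\rangle\rangle\leq C_n\nu^{-(2n-1)}$ directly.

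The \textbf{lower bound} is the substantive direction, and my plan is to combine the already established lower bound $\langle\langle\|u\|_1^2\rangle\rangle\geq C_1\nu^{-1}$ from Theorem \ref{Ai} with a Gagliardo--Nirenberg interpolation calibrated to Burgers scaling. The key inequality is
\[
\|u\|_1 \leq C\,\|u\|_n^{1/(2n-1)}\,|u|_\infty^{(2n-2)/(2n-1)},
\]
the one-dimensional Gagliardo--Nirenberg inequality for mean-zero functions on $\mathbb{S}^1$; the case $n=1$ is trivial and reduces to Theorem \ref{Ai}, so I will apply it for $n\geq 2$. Squaring and applying H\"older's inequality with conjugate exponents $p=2n-1$ and $q=(2n-1)/(2n-2)$ produces, for each $t$,
\[
\mathbb{E}[\|u(t)\|_1^2] \leq C\,\mathbb{E}[\|u(t)\|_n^2]^{1/(2n-1)}\,\mathbb{E}[|u(t)|_\infty^2]^{(2n-2)/(2n-1)}.
\]
By Theorem \ref{At}, $\mathbb{E}[|u(t)|_\infty^2]$ is bounded uniformly in $\nu\in(0,1]$ and in $t\in[1,T+\sigma]$ by a constant depending only on the random force. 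Averaging the resulting inequality in time over $[T,T+\sigma]$ and applying Jensen's inequality to the concave map $x\mapsto x^{1/(2n-1)}$ gives
\[
\langle\langle\|u\|_1^2\rangle\rangle \leq C'\,\langle\langle\|u\|_n^2\rangle\rangle^{1/(2n-1)}.
\]
Chaining this with the lower bound $\langle\langle\|u\|_1^2\rangle\rangle\geq C_1\nu^{-1}$ and raising to the $(2n-1)$-th power delivers $\langle\langle\|u\|_n^2\rangle\rangle\geq C_n^{-1}\nu^{-(2n-1)}$.

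The principal subtlety, and the reason this argument is not routine, is the choice of interpolation. The naive interpolation $\|u\|_1\leq C\|u\|_n^{1/n}\|u\|_0^{1-1/n}$ (using $\mathbb{E}\|u\|_0^2\sim 1$ from the remark after Theorem \ref{IT}) would yield only $\langle\langle\|u\|_n^2\rangle\rangle\gtrsim \nu^{-n}$, off by a factor $\nu^{-(n-1)}$ from the sharp scaling. To recover the correct exponent one must use the Oleinik--Kruzkov control of $|u|_\infty$ from Theorem \ref{At}, which reflects the fact that Burgers shocks keep $|u|_\infty$ of order one even while higher Sobolev norms blow up like $\nu^{-(2n-1)/2}$; it is precisely this shock-wave mechanism, encoded by the $L^\infty$ endpoint in the Gagliardo--Nirenberg interpolation, that inflates the right-hand exponent from $n$ to $2n-1$ and makes the upper and lower bounds match.
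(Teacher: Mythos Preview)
Your proof is correct and follows essentially the same approach as the paper: the upper bound is read off from Theorem~\ref{IT}, and the lower bound is obtained by Gagliardo--Nirenberg interpolation of $\|u\|_1$ between $\|u\|_n$ and a low-regularity norm controlled by the Oleinik--Kruzkov estimate (Theorem~\ref{At}), then combined with the lower bound of Theorem~\ref{Ai}. The only cosmetic difference is that the paper interpolates against $|\partial_x u|_1$ rather than $|u|_\infty$ and applies H\"older directly on the product space $[T,T+\sigma]\times\Omega$ rather than pointwise-in-$t$ followed by Jensen; both variants yield the same exponent $1/(2n-1)$ and the same conclusion.
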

\begin{proof}
The upper bound in the right inequality of \eqref{hua} follows from Theorem \ref{IT}. We have already obtained the lower bound for the averaged first Sobolev norm in Theorem \ref{Ai}. So, it remains to prove the lower bound for $\langle\langle \|u\|_{n}^{2}\rangle\rangle$ when $n\in\mathbb{N}^{\ast}$ and $n\geq2$. By an application of Gagliardo-Nirenberg interpolation inequality,
\begin{equation*}
 \|\partial_{x}u\|^{2}\leq c\|\partial_{x}u\|_{n-1}^{2}|\partial_{x}u|_{1}^{\frac{2n-2}{2n-1}},\quad c>0.
\end{equation*}
Apply H\"{o}lder inequality to the integral $\sigma\int_{\Omega}\int_{T}^{T+\sigma}\cdot\cdot\cdot dt\mathbb{P}(d\omega)$ and use Theorem \ref{At},
\begin{equation*}
\langle\langle \|u\|_{1}^{2}\rangle\rangle\leq c\langle\langle \|u\|_{n}^{2}\rangle\rangle^{\frac{1}{2n-1}}\langle\langle |\partial_{x}u|_{1}^{2}\rangle\rangle^{\frac{2n-2}{2n-1}}\leq C\langle\langle \|u\|_{n}^{2}\rangle\rangle^{\frac{1}{2n-1}},
\end{equation*}
that is to say,
\begin{equation*}
\langle\langle \|u\|_{n}^{2}\rangle\rangle\geq C^{1-2n}\langle\langle \|u\|_{1}^{2}\rangle\rangle^{2n-1}.
\end{equation*}
Combining this with $\langle\langle \|u\|_{1}^{2}\rangle\rangle\geq C\nu^{-1}$, we get the lower bound for $\langle\langle \|u\|_{n}^{2}\rangle\rangle$:
\begin{equation*}
\langle\langle \|u^{\nu}\|_{n}^{2}\rangle\rangle\geq C_{n}^{-1}\nu^{-(2n-1)},\quad\forall\,n\in\mathbb{N}^{\ast}.
\end{equation*}
Exactly, it's the left inequality of \eqref{hua}.
\end{proof}
This theorem turns out to be a powerful and efficient tool to
study stochastic turbulence in the one-dimensional Burgers equation \eqref{B}.
\begin{corollary}
For $n\in\mathbb{N}^{\ast}$ and $k\geq1$, there exists $C(k,n,\sigma_0)>0$ such that
\begin{equation}\label{UK}
C^{-1}\nu^{-n+\frac{1}{2}}\leq\langle\langle \|u\|_{n}^{k}\rangle\rangle^{\frac{1}{k}}\leq C\nu^{-n+\frac{1}{2}}.
\end{equation}
\end{corollary}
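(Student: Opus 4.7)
The plan is to derive \eqref{UK} from the sharp second-moment bound of Theorem \ref{TH} together with the pointwise-in-$t$ estimate \eqref{uk} from the earlier corollary, splitting the lower bound into the two cases $k\geq 2$ (handled by Jensen) and $1\leq k<2$ (handled by H\"older interpolation). The upper bound reduces to simple time-averaging.

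For the upper bound, I would just time-average \eqref{uk}. Since its constant $C(k,n,\epsilon)$ is uniform in the upper time horizon (as visible from the contradiction argument in the proof of Theorem \ref{IT}, whose constant depends only on $\epsilon$), choosing $\epsilon\leq\sigma_0\wedge 1$ so that $[T,T+\sigma]\subset[\epsilon,\infty)$ for $T\geq 1$ and integrating the pointwise bound $\mathbb{E}[\|u(t)\|_n^k]\leq C\nu^{-k(2n-1)/2}$ yields
\begin{equation*}
\langle\langle \|u\|_n^k\rangle\rangle\leq C\,\nu^{-k(n-1/2)},
\end{equation*}
and taking the $1/k$-th power gives the right inequality of \eqref{UK}.

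For the lower bound when $k\geq 2$, I would apply Jensen's inequality on the probability space $([T,T+\sigma],\tfrac{dt}{\sigma})\otimes(\Omega,\mathbb{P})$ with the convex function $x\mapsto x^{k/2}$:
\begin{equation*}
\langle\langle\|u\|_n^k\rangle\rangle=\langle\langle(\|u\|_n^2)^{k/2}\rangle\rangle\geq \langle\langle\|u\|_n^2\rangle\rangle^{k/2}\geq C_n^{-k/2}\,\nu^{-k(n-1/2)},
\end{equation*}
where the last step invokes the left inequality of \eqref{hua}. For $1\leq k<2$, I would instead interpolate the second moment between the $k$-th and an auxiliary $m$-th moment with $m>2$: writing $2=k\theta+m(1-\theta)$ for a unique $\theta\in(0,1)$, H\"older's inequality in $\langle\langle\cdot\rangle\rangle$ gives
\begin{equation*}
\langle\langle\|u\|_n^2\rangle\rangle \leq \langle\langle\|u\|_n^k\rangle\rangle^{\theta}\,\langle\langle\|u\|_n^m\rangle\rangle^{1-\theta}.
\end{equation*}
Plugging in $\langle\langle\|u\|_n^2\rangle\rangle\geq C_n^{-1}\nu^{-(2n-1)}$ from Theorem \ref{TH} and the already-proved upper bound $\langle\langle\|u\|_n^m\rangle\rangle\leq C\nu^{-m(n-1/2)}$, and using $m(1-\theta)=2-k\theta$, the $\nu$-exponents combine to $-k\theta(n-1/2)$, so that $\langle\langle\|u\|_n^k\rangle\rangle^{\theta}\geq C'\nu^{-k\theta(n-1/2)}$ and raising to the $1/(k\theta)$-th power concludes the proof.

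The argument presents no substantive obstacle since all the stochastic analysis and parabolic regularity have already been packaged into Theorem \ref{TH}, Theorem \ref{At} and the pointwise-in-$t$ corollary \eqref{uk}; what remains is pure moment interpolation. The only point requiring attention is the uniformity of the constant in \eqref{uk} over the moving window $[T,T+\sigma]$ as $T$ and $\sigma$ vary in the admissible range, which is inherited from the time-independence of the constant $C(n,\epsilon)$ produced by the contradiction argument of Theorem \ref{IT}.
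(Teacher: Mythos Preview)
Your proposal is correct and follows essentially the same approach as the paper: the upper bound is obtained by averaging \eqref{uk}, the lower bound for $k\geq 2$ from \eqref{hua} via H\"older/Jensen, and the lower bound for $1\leq k<2$ by interpolating the second moment between a smaller and a larger moment. The only cosmetic difference is that the paper first establishes the case $k=1$ via the specific interpolation $\langle\langle\|u\|_n^2\rangle\rangle\leq\langle\langle\|u\|_n^4\rangle\rangle^{1/3}\langle\langle\|u\|_n\rangle\rangle^{2/3}$ and then invokes monotonicity of $L^p$-norms for $k\in(1,2)$, whereas you treat all $k\in[1,2)$ at once with the parametrized splitting $2=k\theta+m(1-\theta)$; the underlying idea is the same.
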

\begin{proof}
After averaging in \eqref{uk}, the right inequality in \eqref{UK} is immediate. If $k\geq2$, then the left inequality of
\eqref{UK}  is a result of H\"{o}lder's inequality and \eqref{hua} occurring earlier. Now, we only need to establish the left inequality of
\eqref{UK} for $k\in[1,2)$.  Taking advantage of  H\"{o}lder's inequality, we exploit
\begin{equation*}
\langle\langle \|u\|_{n}^{2}\rangle\rangle=\langle\langle \|u\|_{n}^{\frac{4}{3}}\|u\|_{n}^{\frac{2}{3}}\rangle\rangle\leq\langle\langle \|u\|_{n}^{4}\rangle\rangle^{\frac{1}{3}}\langle\langle \|u\|_{n}\rangle\rangle^{\frac{2}{3}}.
\end{equation*}
Make use of \eqref{UK} with $k=2$ and $k=4$,
\begin{equation*}
\langle\langle \|u\|_{n}\rangle\rangle\geq\langle\langle \|u\|_{n}^{2}\rangle\rangle^{\frac{3}{2}}\langle\langle \|u\|_{n}^{4}\rangle\rangle^{-\frac{1}{2}}\geq\big(C^{-1}(2,n,\sigma_{0})\nu^{-n+\frac{1}{2}}\big)^{3}\big(C(4,n,\sigma_{0})\nu^{-n+\frac{1}{2}}\big)^{-2}=:C^{-1}\nu^{-n+\frac{1}{2}},
\end{equation*}
and then the left inequality of \eqref{UK} is established for $k=1$.
Finally, for $k\in(1,2)$, the left inequality in \eqref{UK} is a consequence of
that with $k=1$ and H\"{o}lder's inequality.
\end{proof}

\section{Stochastic turbulence}\label{ST}
Our aim in the present section is to study the statistical quantities of one-dimensional turbulence $u(t,x)$ given by stochatic Burgers equation with respect to cylindrical L\'evy processes.
\subsection{The structure function for stochastic turbulence}
The structure function is one of the main objects of hydrodynamic turbulence \cite{MW}. For the one-dimensional fluid described by stochastic Burgers equation, the structure function is defined as follows:

\begin{definition} Small-scale increments corresponding to the solution $u$ of stochastic Burgers equation \eqref{B} are $|u(x+l)-u(x)|$, $x\in \mathbb{S}^{1}$, $|l|\ll1$. Their moments of degree $p>0$ are
\begin{equation*}
\langle\langle \int_{\mathbb{S}^{1}} |u(x+l)-u(x)|^{p} dx\rangle\rangle=:S_p(l; u),
\end{equation*}
where $(l,p)\mapsto S_p(l; u)$ is called the structure function of $u$.
\end{definition}
In physics, the basic quantity characterising a solution $u(t,x)$ as a one-dimensional
turbulent flow is its dissipation scale $l_d$, also known as Kolmogorov's inner scale. The dissipation scale for Burgulence described by \eqref{B} in the Fourier presentation is $l_d=C\nu^{-1}$, such that for $|k|\geq l_d$ the
averaged squared norm of the $k$-th Fourier coefficient $\widehat{u}_{k}(t)$ decays very fast,
where $\widehat{u}_{k}(t)$ is from Fourier series $u(t,x)=\sum_{k=\pm1,\pm2,\cdot\cdot\cdot}\widehat{u}_{k}(t)e^{2i\pi kx}$. In other words, for any $N\in\mathbb{N}^{\ast}$ and $\gamma>0$ there exists a $C_{N,\gamma}$ such that
\begin{equation}\label{PA}
\langle\langle |\widehat{u}_{k}(t)|^{2}\rangle\rangle\leq C_{N,\gamma}|k|^{-N},\quad\forall\,|k|\geq\nu^{-1-\gamma}.
\end{equation}
To check this, we know that $\langle\langle |\widehat{u}_{k}(t)|^{2}\rangle\rangle\leq C_n|k|^{-2n}\nu^{-(2n-1)}\leq C_n|k|^{-2n\frac{\gamma}{1+\gamma}}$, $n\in\mathbb{N}^{\ast}$, by using \eqref{hua} in Theorem \ref{TH}.

The turbulence ranges are zones specifying the size for increments of $x$. The dissipation range, the inertial range and the energy range in $x$
are non-empty and non-intersecting intervals $(0,\hat{C}_{1}\nu]=(0,l_{d}^{-1}]$, $(\hat{C}_{1}\nu,\hat{C}_2]=(l_{d}^{-1}, \hat{C}_2]$ and $(\hat{C}_2,1]$, respectively.
Here, $\hat{C}_1,\hat{C}_2>0$ depend on the random force.

The functions $S_p(l;u)$ satisfy the following upper estimates:
\begin{lemma}\label{Ma}
For $|l|\in(0,1]$, $\nu\in(0,1]$ and $p>0$, there is $C_p>0$ such that
\begin{eqnarray*}
S_p(l; u)\leq\left\{\begin{array}{l}
C_p|l|^{p}\nu^{-(p-1)},\quad\text{if}~~p\geq1;\\
C_p|l|^{p},\quad\quad\quad\quad\text{if}~~p\in(0,1).
\end{array}
\right.
\end{eqnarray*}
\end{lemma}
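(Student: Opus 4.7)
The plan is to separate the proof according to whether $p\geq 1$ or $p\in(0,1)$. For the first case, I would start from the integral representation $u(x+l)-u(x)=\int_{x}^{x+l}u'(y)\,dy$ and apply H\"{o}lder's inequality pointwise, obtaining $|u(x+l)-u(x)|^{p}\leq|l|^{p-1}\int_{x}^{x+l}|u'(y)|^{p}dy$. Integrating in $x$, swapping the order of integration by Fubini, and using the periodicity of $\mathbb{S}^{1}$ yields
\[
\int_{\mathbb{S}^{1}}|u(x+l)-u(x)|^{p}dx\leq|l|^{p}|u'|_{p}^{p},
\]
so after applying the double average the task reduces to proving $\langle\langle|u'|_{p}^{p}\rangle\rangle\leq C_{p}\nu^{-(p-1)}$.

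To establish this, I would use the elementary interpolation $|u'|_{p}^{p}\leq|u'|_{\infty}^{p-1}|u'|_{1}$. The factor $|u'|_{1}$ has moments bounded uniformly in $\nu$ by the Oleinik--Kruzkov inequality \eqref{O-K}. For $|u'|_{\infty}$, I would invoke the one-dimensional Gagliardo--Nirenberg inequality $|u'|_{\infty}\leq C\|u\|_{2}^{1/2}\|u\|_{1}^{1/2}$, available because $u'$ is mean-zero on $\mathbb{S}^{1}$. Applying Cauchy--Schwarz inside the bracket $\langle\langle\cdot\rangle\rangle$ separates $|u'|_{1}$ from $|u'|_{\infty}^{p-1}$, and a second Cauchy--Schwarz splits $\|u\|_{2}$ from $\|u\|_{1}$. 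The moment bounds \eqref{UK} then give $\langle\langle\|u\|_{2}^{q}\rangle\rangle\leq C\nu^{-3q/2}$ and $\langle\langle\|u\|_{1}^{q}\rangle\rangle\leq C\nu^{-q/2}$ (with Jensen's inequality used to reach the small exponents $q<1$ not covered by \eqref{UK} as stated), which combine into $\langle\langle|u'|_{\infty}^{2(p-1)}\rangle\rangle\leq C\nu^{-2(p-1)}$ and hence the target estimate.

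For $p\in(0,1)$, I would exploit the concavity of $t\mapsto t^{p}$ and apply Jensen's inequality on the finite-measure space $(\mathbb{S}^{1},dx/(2\pi))$:
\[
\int_{\mathbb{S}^{1}}|u(x+l)-u(x)|^{p}dx\leq(2\pi)^{1-p}\Bigl(\int_{\mathbb{S}^{1}}|u(x+l)-u(x)|\,dx\Bigr)^{p}.
\]
The inner $L^{1}$-increment is handled by the $p=1$ instance of the first step, giving $\int|u(x+l)-u(x)|\,dx\leq|l||u'|_{1}$. Taking the double average and invoking \eqref{O-K} once more to bound $\langle\langle|u'|_{1}^{p}\rangle\rangle$ uniformly in $\nu$ produces $S_{p}(l;u)\leq C_{p}|l|^{p}$.

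The central difficulty lies in the $p\geq 1$ case, specifically in obtaining the sharp exponent $\nu^{-(p-1)}$ rather than a weaker power. A naive Sobolev embedding $|u'|_{\infty}\lesssim\|u\|_{3/2+\delta}$ would only yield $\nu^{-(1+\delta)(p-1)}$, losing a factor $\nu^{-\delta(p-1)}$. The one-dimensional Gagliardo--Nirenberg inequality is the essential tool because its interpolation exponents align exactly with the dissipation scaling: the combination $\sqrt{\nu^{-3q/2}\cdot\nu^{-q/2}}=\nu^{-q}$ recovers the physical estimate $|u'|_{\infty}\sim\nu^{-1}$ in moments, and threading the Cauchy--Schwarz exponents to match the scalings furnished by \eqref{UK} is the delicate bookkeeping that makes the argument close.
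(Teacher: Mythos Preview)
Your argument is correct and reaches the sharp exponent, but the route differs from the paper's. For $p\geq 1$ the paper does not go through the pointwise H\"older bound $\int_{\mathbb{S}^1}|u(x+l)-u(x)|^{p}dx\leq |l|^{p}|u'|_{p}^{p}$; instead it factors $|u(x+l)-u(x)|^{p}\leq |u(x+l)-u(x)|\cdot\max_{x}|u(x+l)-u(x)|^{p-1}$ and splits the bracket by H\"older into $I\cdot J$. The $L^{1}$ factor $I$ is controlled via the one-sided Oleinik estimate $\int_{\mathbb{S}^1}|u(x+l)-u(x)|\,dx\leq 2|l|\sup_{x}(\partial_{x}u)^{+}$, while $J$ is bounded by $|l|^{p-1}\langle\langle|\partial_{x}u|_{\infty}^{p}\rangle\rangle^{(p-1)/p}$ and then by a Gagliardo--Nirenberg interpolation between $\|u\|_{n}$ and $|\partial_{x}u|_{1}$ with a free integer $n$ chosen large. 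Your approach replaces the one-sided trick by the straightforward bound $\int|u(\cdot+l)-u(\cdot)|\leq |l|\,|u'|_{1}$ hidden inside the pointwise H\"older step, and replaces the $n$-dependent interpolation by the fixed Agmon inequality $|u'|_{\infty}\leq C\|u\|_{2}^{1/2}\|u\|_{1}^{1/2}$. This is arguably more elementary and avoids invoking $\sup_{x}(\partial_{x}u)^{+}$, which in the paper is available only from the proof (not the statement) of Theorem~\ref{At}; the paper's version, on the other hand, keeps the flexibility of choosing $n$ and interpolates against $|\partial_{x}u|_{1}$ rather than $\|u\|_{1}$, which is sometimes advantageous in related estimates. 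For $p\in(0,1)$ the two arguments are essentially identical: both reduce to $S_{1}(l;u)$ by concavity/Jensen and then quote the $p=1$ case.
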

\begin{proof}
If we begin by considering the case $p\geq1$, then
\begin{equation*}
S_p(l; u)\leq\langle\langle \int_{\mathbb{S}^{1}} |u(x+l)-u(x)|^{p} dx\cdot\max_{x}|u(x+l)-u(x)|^{p-1}\rangle\rangle.
\end{equation*}
By H\"{o}lder's inequality, we have
\begin{equation*}
S_p(l; u)\leq\underset{=:I}{\underbrace{\langle\langle\Big(\int_{\mathbb{S}^{1}} |u(x+l)-u(x)|^{p}dx\Big)^{p}\rangle\rangle^{\frac{1}{p}}}}\underset{=:J}{\underbrace{\langle\langle \max_{x}|u(x+l)-u(x)|^{p}\rangle\rangle^{\frac{p-1}{p}}}}.
\end{equation*}
On one hand,  noticing that  the space average of $x\mapsto u(x+l)-u(x)$ vanishes identically for
all $t$. We have
\begin{align*}
\int_{\mathbb{S}^{1}}|u(x+l)-u(x)|dx&\leq\int_{\mathbb{S}^{1}}(u(x+l)-u(x))^{+}dx+\int_{\mathbb{S}^{1}}(u(x+l)-u(x))^{-}dx\\
  &\leq2\int_{\mathbb{S}^{1}}(u(x+l)-u(x))^{+}dx\leq2\sup_{x}(\partial_{x}u)^{+}\cdot |l|,
\end{align*}
which yields that $I\leq2|l|\langle\langle[\sup_{x}(\partial_{x}u)^{+}]^{p}\rangle\rangle^{\frac{1}{p}}$. So, by Theorem \ref{At}, we get that $I\leq C_{p}|l|$.
On the other hand, $J\leq\langle\langle |l|^{p}|\partial_{x}u|_{\infty}^{p}\rangle\rangle^{\frac{p-1}{p}}$.
From Gagliardo-Nirenberg interpolation inequality and H\"{o}lder's inequality, we obtain
\begin{align*}
\langle\langle |l|^{p}|\partial_{x}u|_{\infty}^{p}\rangle\rangle^{\frac{p-1}{p}}&\leq\Big(C|l|^{p}\langle\langle\|u\|_{n}^{\frac{2p}{2n-1}}|\partial_{x}u|_{1}^{\frac{(2n-3)p}{2n-1}}\rangle\rangle\Big)^{\frac{p-1}{p}} \\
&\leq C_pl^{p-1}\langle\langle\|u\|_{n}^{2}\rangle\rangle^{\frac{p-1}{2n-1}}\langle\langle|\partial_{x}u|_{1}^{\frac{(2n-3)p}{2n-1-p}}\rangle\rangle^{\frac{(2n-1-p)(p-1)}{(2n-1)p}}.
\end{align*}
Using Theorem \ref{At} and \ref{TH}, we get that $J\leq C_p|l|^{p-1}\nu^{-(p-1)}$.
Finally, $S_{p}(l; u)\leq IJ\leq C_p|l|^{p}\nu^{-(p-1)}$.

The case $p\in(0,1)$ follows immediately from the case $p=1$ and H\"{o}lder's inequality:
\begin{equation*}
S_p(l; u)\leq\langle\langle\int_{\mathbb{S}^{1}}|u(x+l)-u(x)|dx\rangle\rangle^{p}=S_1(l; u)^{p}\leq C_p|l|^p.
\end{equation*}
\end{proof}
For $|l|\in(\hat{C}_1\nu,1]$, we have a better upper bound if $p\geq1$.
\begin{lemma}
For $\nu\in(0,1]$, $|l|\in(\hat{C}_1\nu,1]$ and $p>0$, there is $C_p>0$ such that
\begin{eqnarray*}
S_p(l; u)\leq\left\{\begin{array}{l}
C_p|l|,\quad\text{if}~~p\geq1;\\
C_p|l|^{p},~~~\text{if}~~p\in(0,1).
\end{array}
\right.
\end{eqnarray*}
\end{lemma}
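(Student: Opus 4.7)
The strategy splits according to the two regimes of the statement. The case $p\in(0,1)$ follows at once from the case $p=1$ via H\"older's inequality, exactly as in the last display of the proof of Lemma~\ref{Ma}, so the main task is to prove $S_p(l;u)\leq C_p|l|$ for every $p\geq 1$ without the $\nu^{-(p-1)}$ loss that appeared in Lemma~\ref{Ma}. The key idea is to stop bounding $|\partial_x u|_\infty$ (which is of order $\nu^{-1}$ in a turbulent regime) and instead use only the one-sided control on $(\partial_x u)^{+}$, equivalently on $|\partial_x u|_1$, furnished by the Oleinik--Kruzkov estimate of Theorem~\ref{At}.

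Concretely, for $p\geq1$ I would start from the pointwise bound $|u(x+l)-u(x)|^{p}\leq(2|u|_\infty)^{p-1}|u(x+l)-u(x)|$, which reduces the problem to controlling $\int_{\mathbb{S}^1}|u(x+l)-u(x)|\,dx$. Writing $u(x+l)-u(x)=\int_x^{x+l}\partial_x u(y)\,dy$ and applying Fubini yields $\int_{\mathbb{S}^1}(u(x+l)-u(x))^{+}\,dx\leq l\int_{\mathbb{S}^1}(\partial_x u(y))^{+}\,dy\leq l\,|\partial_x u|_1$. The mass-conservation assumption $\int_{\mathbb{S}^1}u(\cdot)\,dx\equiv 0$ forces $\int_{\mathbb{S}^1}(u(x+l)-u(x))\,dx=0$, so the integrals of the positive and negative parts coincide, and hence $\int_{\mathbb{S}^1}|u(x+l)-u(x)|\,dx\leq 2l\,|\partial_x u|_1$.

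Substituting back gives the pathwise bound $\int_{\mathbb{S}^1}|u(x+l)-u(x)|^p\,dx\leq 2^{p}l\,|u|_\infty^{p-1}|\partial_x u|_1$. Applying the double average $\langle\langle\cdot\rangle\rangle$ together with H\"older's inequality in $(t,\omega)$, the factors involving $|u|_\infty$ and $|\partial_x u|_1$ are controlled by Theorem~\ref{At}, which bounds every moment of these quantities uniformly in $\nu\in(0,1]$ and $t\in[\epsilon,T]$ with $\epsilon$ taken as $1$ since $T\geq 1$. This delivers $S_p(l;u)\leq C_p l$ with $C_p$ independent of $\nu$.

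The delicate point, and essentially the whole leverage of the argument, is the mean-zero identity used in the negative-part estimate: it relies crucially on the preservation of $\int_{\mathbb{S}^1}u\,dx=0$ along \eqref{B}, which in turn rests on the assumptions $\int u_0\,dx=\int\eta\,dx=0$ from Section~\ref{AA}. Once this identity is in hand, the argument is purely pathwise and never invokes the $\nu^{-1}$ scaling of $|\partial_x u|_\infty$; in particular the resulting bound actually holds for every $l\in(0,1]$, and the restriction $|l|\in(\hat{C}_1\nu,1]$ in the statement merely delimits the inertial range in which this new estimate strictly improves upon Lemma~\ref{Ma}.
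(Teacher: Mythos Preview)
Your proof is correct and follows essentially the same route as the paper: both arguments keep the $L^1$-increment bound $I\leq C_p|l|$ from Lemma~\ref{Ma} and replace the $|l|\,|\partial_x u|_\infty$ estimate on the remaining $p-1$ powers by the $\nu$-free bound $|u(x+l)-u(x)|\leq 2|u|_\infty$, then appeal to Theorem~\ref{At}. The only cosmetic difference is ordering---the paper applies H\"older in $(t,\omega)$ first (retaining the $I\cdot J$ decomposition and writing $J\leq\langle\langle(2|u|_\infty)^p\rangle\rangle^{(p-1)/p}$), whereas you establish the pathwise inequality $\int|u(\cdot+l)-u(\cdot)|^p\,dx\leq 2^p l\,|u|_\infty^{p-1}|\partial_x u|_1$ before averaging.
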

\begin{proof}
The calculations are almost the same as in Lemma \ref{Ma}. The only
difference is that we use another bound for $J$, i.e.,
\begin{equation*}
S_{p}(l; u)\leq C_{p}|l|J\leq C_{p}|l|\langle\langle (2|u|_{\infty})^{p}\rangle\rangle^{\frac{p-1}{p}}\leq C_{p}|l|.
\end{equation*}
\end{proof}
Now, we prove the lower estimates for $S_{p}(l; u)$.
\begin{lemma}\label{Xiao}
Assume that $\nu\in(0,\nu_{0}]$, where the constant $\nu_{0}\in(0,1]$ only depends on random force with $0<\hat{C}_1\nu_{0}<\hat{C}_2<1$. For $|l|\in(\hat{C}_1\nu,\hat{C}_2]$ and $p>0$, there is $C_p>0$ such that
\begin{eqnarray}\label{Cool}
S_{p}(l; u)\geq\left\{\begin{array}{l}
C_p|l|,\quad\text{if}~~p\geq1;\\
C_p|l|^{p},~~~\text{if}~~p\in(0,1).
\end{array}
\right.
\end{eqnarray}
\end{lemma}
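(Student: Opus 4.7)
The plan is to reduce everything to a Kolmogorov-type lower bound on $S_3(l;u)$ and then interpolate. Throughout, set $g(x):=u(x+l)-u(x)$; periodicity and zero mean give $\int_{\mathbb{S}^1}g\,dx = 0$, hence $\int g^+\,dx = \int g^-\,dx = \tfrac{1}{2}\int|g|\,dx$, an identity I use repeatedly.

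First I would establish $S_3(l;u)\geq cl$ in the inertial range via a Kolmogorov $4/5$-type flux identity. Apply It\^o's formula \eqref{Ito} to the functional $\int_{\mathbb{S}^1}(u(x+l)-u(x))^3\,dx$, take the bracket $\langle\langle\cdot\rangle\rangle$, and integrate by parts in the transport term. In the stationary balance, the transport part produces the dominant flux of order $l\,\nu\langle\langle\|u\|_1^2\rangle\rangle$, the viscous contribution is $O(l^2\nu^{-1})$, and the L\'evy compensator contribution is $O(l^2)$: the latter is obtained by expanding $(e_k(x+l)-e_k(x))^2 \sim \lambda_k l^2$ and summing against $\beta_k^2$, which converges thanks to $\beta_k\lesssim\lambda_k^{-\gamma_0}$ with $\gamma_0>1$ from Section \ref{AA}. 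Combining with $\nu\langle\langle\|u\|_1^2\rangle\rangle\sim 1$ from Theorem \ref{Ai} yields $S_3(l;u)\geq cl$ provided $\nu\leq\nu_0$ is small enough.

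Given this, the case $p\in[1,3]$ follows from the pointwise bound $|g|^3\leq |g|_\infty^{3-p}|g|^p$ with $|g|_\infty\leq 2|u|_\infty$ and truncation at a large constant $M$: $S_3(l;u)\leq M^{3-p}S_p(l;u) + \mathcal{T}(M)$, where the tail $\mathcal{T}(M)$ is controlled by Chebyshev's inequality and the polynomial $L^\infty$ moments of $u$ from Theorem \ref{At}. Choosing $M$ so that $\mathcal{T}(M)\leq S_3/2$ gives $S_p(l;u)\geq S_3(l;u)/(2M^{3-p})\geq c_p\,l$, and in particular $S_1(l;u)\geq c l$. For $p>3$ I use the H\"older interpolation
\[
\int(g^-)^3\,dx \leq \Big(\int(g^-)^p\,dx\Big)^{2/(p-1)}\Big(\int g^-\,dx\Big)^{(p-3)/(p-1)},
\]
and observe $\int(g^-)^3\geq\int|g|^3-(Cl)^3|\mathbb{S}^1|$ pathwise (since $g^+\leq Cl$ by Oleinik), which is at least $S_3/2$ on average for $l$ small. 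Averaging with H\"older in conjugate exponents $(p-1)/2$ and $(p-1)/(p-3)$, and combining with $S_1(l;u)\leq Cl$ from the preceding upper-bound lemma, yields $S_p(l;u)\geq C_p l$. Finally, for $p\in(0,1)$, Oleinik-Kruzkov (Theorem \ref{At}) gives $g^+(x)\leq Cl/t \leq Cl$ for $t\in[T,T+\sigma]$; since $p-1<0$, $(g^+)^p\geq (Cl)^{p-1}g^+$ pointwise, and after integrating, using $\int g^+ = \tfrac12\int|g|$, averaging, and invoking $S_1(l;u)\geq cl$ I obtain $S_p(l;u)\geq \tfrac12(Cl)^{p-1}S_1(l;u)\geq C_p l^p$.

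The main obstacle is Step 1: the Kolmogorov flux identity under L\'evy forcing. Under Gaussian noise this is Boritchev's argument \cite{AB14}; the L\'evy version requires the compensated small-jump and uncompensated large-jump integrals in \eqref{Ito} to yield only $O(l^2)$ corrections relative to the deterministic transport flux, which rests on the decay $\beta_k\lesssim\lambda_k^{-\gamma_0}$ with $\gamma_0>1$ and the uniformly bounded jumps of $\{L_k\}$ established in Section \ref{AA}.
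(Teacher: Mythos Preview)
Your route is genuinely different from the paper's. The paper never touches a K\'arm\'an--Howarth/4-5 type identity. Instead it works pathwise on a good set: on the product space $([T,T+\sigma]\times\Omega,\rho)$ it isolates a set $Q_2$ of measure bounded below, on which $|\partial_x u^+|_\infty$, $|\partial_x u|_1$, $\nu^{3/2}\|u\|_2$ and $\nu^{5/2}\|u\|_3$ are all $\leq K$ deterministically. Combining $\|u\|_1^2\geq C\nu^{-1}$ with $|\partial_x u|_1\leq K$ forces $|\partial_x u^-|_\infty\geq \tilde C\nu^{-1}$ on $Q_2$, i.e.\ each such trajectory carries a steep ``cliff'' at some point $y$; the bound on $\|u\|_2,\|u\|_3$ guarantees the cliff has width $\gtrsim\nu$. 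One then integrates $|u(x+l)-u(x)|^p$ over $[y-|l|/2,y]$ and reads off $\geq C|l|$ directly. The $p\in(0,1)$ step is close to yours (reverse H\"older using $g^+\leq K|l|$ on $Q_2$). This approach is entirely geometric and uses only Theorems~\ref{At}, \ref{TH} and Chebyshev.

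Your Step~1 as written has a real gap. Applying It\^o's formula to $F(u)=\int_{\mathbb{S}^1}(\delta_l u)^3dx$ does \emph{not} produce the flux balance you describe. A direct computation gives the transport contribution
\[
F'(u)(-uu_x)=-\int_{\mathbb{S}^1}(\delta_l u)^3\big(u_x(x)+u_x(x+l)\big)\,dx,
\]
and the viscous contribution $-6\nu\int\delta_l u\,(\delta_l u_x)^2\,dx$; neither reduces to $l\,\nu\langle\langle\|u\|_1^2\rangle\rangle$ in the stationary balance. The Burgers 4/5-law is obtained instead from It\^o applied to the two-point function $\int u(x)u(x+l)\,dx$ (equivalently to $S_2$): there the viscous term splits as $-4\nu\|u\|_1^2+2\nu\partial_l^2 C(l)$ and the transport term is $-\tfrac{1}{6}\partial_l\langle\langle\int(\delta_l u)^3\rangle\rangle$; integrating in $l$ then gives $\langle\langle\int(\delta_l u)^3\rangle\rangle\sim -\varepsilon\,l$ in the inertial range, whence $S_3\geq c\,l$. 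If you reroute Step~1 through the $S_2$ equation, the L\'evy terms can indeed be controlled as you say (the compensator on $\|\cdot\|^2$ is handled exactly as in \eqref{I4}), and your interpolation Steps for $p\in[1,3]$, $p>3$, $p\in(0,1)$ are then essentially sound --- though you should make explicit that the Oleinik constant in $g^+\leq C(\omega)l$ is random, and use a truncation to a high-probability set to absorb it, as the paper does with $Q_2$.
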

\begin{proof}
Define the probability space
\begin{equation*}
(\Omega_{T},\mathcal{F}_{T},\rho):=\Big([T,T+\sigma]\times\Omega,\mathcal{T}\times\mathcal{F},\frac{dt}{\sigma}\times\mathbb{P}\Big),
\end{equation*}
where $\sigma\geq\sigma_0>0$, $T\geq1$ and $\mathcal{T}$ is the Borel $\sigma$-algebra on $[T,T+\sigma]$.
Let $\varepsilon>0$ and $Q_{1}=\{(t,\omega)\in\Omega_{T}: \|u(t,\omega)\|_{1}\leq\varepsilon\}$.
Then $\rho(Q_{1})\geq C(1,\sigma_{0})$. Let $K>0$ and
\begin{equation*}
Q_{2}=\{(t,\omega)\in Q_{1}: |\partial_{x}u^{+}(t,\omega)|_{\infty}+|\partial_{x}u(t,\omega)|_{1}+\nu^{\frac{3}{2}}\|u(t,\omega)\|_{2}+\nu^{\frac{5}{2}}\|u(t,\omega)\|_{3}\leq K\}.
\end{equation*}
By Theorem \ref{At}, estimate \eqref{UK} and Chebyshev's inequality:
\begin{equation*}
\rho(Q_{2})\geq C(1,\sigma_{0})-C_{1}K^{-1}\geq\frac{1}{2}C(1,\sigma_{0}),
\end{equation*}
for all $\nu\in(0,\nu_{0}]$ and if $K$ is sufficiently large. Let $(t,\omega)\in Q_{2}$ and denote $u(t,\omega,x)$ by $u(x)$. To establish \eqref{Cool}, we show that $u$ satisfies
\begin{equation}\label{ha}
\int_{\mathbb{S}^{1}}|u(x+l)-u(x)|^{p}dx\geq C|l|^{\min(1,p)},\quad |l|\in[\hat{C}_{1}\nu,\hat{C}_{2}],~~p>0,
\end{equation}
uniformly in $\nu\in(0,\nu_{0}]$, where $C=C(\hat{C}_1,\hat{C}_2,p)>0$.

First, consider the case $p\geq1$. Note that
\begin{equation*}
C \nu^{-1}\leq\int_{\mathbb{S}^{1}}|\partial_{x}u|^{2}dx\leq|\partial_{x}u|_{\infty}|\partial_{x}u|_{1}\leq K|\partial_{x}u|_{\infty}.
\end{equation*}
Therefore,
\begin{equation}\label{haha}
|\partial_{x}u|_{\infty}\geq C K^{-1}\nu^{-1}=:\tilde{C}\nu^{-1}.
\end{equation}
 As $|\partial_{x}u^{+}|_{\infty}\leq K$, we gain $|\partial_{x}u^{+}|_{\infty}\leq\frac{1}{2}\tilde{C}\nu^{-1}$ if $\nu\leq\frac{1}{2}\tilde{C}K^{-1}=:\nu_{0}$. Thus, by \eqref{haha}, we have
\begin{equation*}
|\partial_{x}u^{+}|_{\infty}\leq\frac{1}{2}\tilde{C}\nu^{-1}~~\text{and}~~|\partial_{x}u^{-}|_{\infty}\geq\tilde{C}\nu^{-1},\quad \text{if}~~\nu\in(0,\nu_{0}].
\end{equation*}
Denoted by $y=y(t,\omega)=\min\{x\in[0,1): \partial_{x}u^{-}(x)\geq\tilde{C}\nu^{-1}\}$: $y$ is a well-defined measurable function over $Q_{2}$ if $\nu\in(0,\nu_{0}]$. Admittedly,
\begin{equation}\label{hei}
\int_{\mathbb{S}^{1}}|u(x+l)-u(x)|^{p}dx\geq\int_{y-\frac{|l|}{2}}^{y}\Big|\int_{x}^{x+l}\partial_{x}u^{-}(z)dz-\int_{x}^{x+l}\partial_{x}u^{+}(z)dz\Big|^{p}dx.
\end{equation}
Gagliardo-Niremberg inequality ensures that $|\partial_{xx}u|_{\infty}\leq c\|u\|_{2}^{\frac{1}{2}}\|u\|_{3}^{\frac{1}{2}}$, which implies that $|\partial_{xx}u|_{\infty}\leq cK\nu^{-2}$. So in the interval $[x,x+\tilde{c}\nu]$, $\tilde{c}>0$, we have
\begin{equation*}
\partial_{x}u^{-}\geq\tilde{C}\nu^{-1}-\tilde{c}cK\nu^{-1}=\frac{3}{4}\tilde{C}\nu^{-1},\quad \text{if}~~\tilde{c}=\frac{\tilde{C}}{4cK}.
\end{equation*}
Suppose that $|l|\geq\tilde{c}\nu$. Because $\partial_{x}u^{+}\leq K$,
\begin{equation*}
\int_{x}^{x+l}\partial_{x}u^{-}(z)dz\geq\int_{x}^{x+\tilde{c}\nu}\partial_{x}u^{-}(z)dz\geq\frac{3}{4}\tilde{C}\tilde{c},\quad\text{and}~~\int_{x}^{x+l}\partial_{x}u^{+}(z)dz\leq Kl.
\end{equation*}
Using \eqref{hei} we get that
\begin{equation*}
\int_{\mathbb{S}^{1}}|u(x+l)-u(x)|^{p}dx\geq\int_{y-\frac{|l|}{2}}^{y}|\frac{3}{4}\tilde{C}\tilde{c}-Kl|^{p}dx\geq\frac{|l|}{2}(\frac{1}{2}\tilde{C}\tilde{c})^{p},
\end{equation*}
provided that $|l|\in[\tilde{c}\nu,\frac{\tilde{C}\tilde{c}}{4K}]$ and $\nu\in(0,\nu_{0}]$. Thus, the inequality \eqref{Cool} is established with $\nu_{0}=\frac{1}{2}\tilde{C}K^{-1}$, $\hat{C}_1=\tilde{c}$ and $\hat{C}_2=\frac{\tilde{C}\tilde{c}}{4K}$, if $p\geq1$.

Now, suppose that $p\in(0,1)$. Let $f$ be a positive arbitrary function. We can write it as $f=f^{\frac{2(1-p)}{2-p}}f^{\frac{p}{2-p}}$. So, by means of H\"{o}lder's inequality, we have $(\int f)^{2-p}\leq(\int f^2)^{1-p}(\int f^p)$. Hence,
\begin{align*}
\int_{\mathbb{S}^{1}}|u(x+l)-u(x)|^{p}dx&\int_{\mathbb{S}^{1}}\big(\big[u(x+l)-u(x)\big]^{+}\big)^{p}dx\\
                                        &\geq\Big(\int_{\mathbb{S}^{1}}\big(\big[u(x+l)-u(x)\big]^{+}\big)^{2}dx\Big)^{p-1}\Big(\int_{\mathbb{S}^{1}}\big(\big[u(x+l)-u(x)\big]^{+}\big)dx\Big)^{2-p}.
\end{align*}
Owing to $\partial_{x}u^{+}\leq K$, we observe that $\big[u(x+l)-u(x)\big]^{+}\leq M|l|$. Moreover, $p-1<0$, so the first term of the right hand side of this last inequality is reduced to $(K^{2}|l|^{2})^{p-1}$. Observe that $\int_{\mathbb{S}^{1}}[u(x+l)-u(x)]dx=0$. Therefore
\begin{equation*}
\int_{\mathbb{S}^{1}}\big[u(x+l)-u(x)\big]^{+}dx=\frac{1}{2}\int_{\mathbb{S}^{1}}|u(x+l)-u(x)|dx,
\end{equation*}
and utilizing \eqref{ha} with $p=1$, we get that the second term is reduced to $C|l|^{2-p}$. Finally, \eqref{Cool} is established for the case $p\in(0,1)$.
\end{proof}
In the same way as above, we have the following lower estimates for $S_{p}(l; u)$.
\begin{lemma}\label{Smile}
Assume that $\nu\in(0,\nu_{0}]$, where the constant $\nu_{0}\in(0,1]$ only depends on random force with $0<\hat{C}_1\nu_{0}<\hat{C}_2<1$. For $|l|\in(0,\hat{C}_1\nu]$ and $p>0$, there is $C_p>0$ such that
\begin{eqnarray*}
S_p(l; u)\geq\left\{\begin{array}{l}
C_p|l|^{p}\nu^{-(p-1)},\quad\text{if}~~p\geq1;\\
C_p|l|^{p},\quad\quad\quad\quad\text{if}~~p\in(0,1).
\end{array}
\right.
\end{eqnarray*}
\end{lemma}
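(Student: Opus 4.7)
The plan is to follow the template of Lemma \ref{Xiao} but exploit the fact that in the dissipation range $|l|\leq \hat{C}_1\nu$, the spatial increment $l$ is smaller than the width of the shock-like region produced by the argument there. This lets us integrate the large local gradient over an interval of length $\sim \nu$ rather than just $\sim |l|$, giving an extra factor of $\nu$ that matches the exponent $\nu^{-(p-1)}$ in the conclusion.

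More precisely, first I would restrict to the event $Q_{2}\subset\Omega_{T}$ constructed in the proof of Lemma \ref{Xiao}, on which $\rho(Q_{2})\geq \tfrac12 C(1,\sigma_0)$, the bounds $|\partial_x u^{+}|_\infty\leq K$, $|\partial_x u|_1\leq K$, $\nu^{3/2}\|u\|_2\leq K$, $\nu^{5/2}\|u\|_3\leq K$ all hold, and a point $y=y(t,\omega)$ exists with $\partial_x u(y)\leq -\tilde C\nu^{-1}$. Combined with $|\partial_{xx}u|_\infty\leq c\|u\|_2^{1/2}\|u\|_3^{1/2}\leq cK\nu^{-2}$, this shows $\partial_x u(z)\leq -\tfrac34\tilde C\nu^{-1}$ on the whole interval $J:=[y,y+\tilde c\nu]$ with $\tilde c=\tilde C/(4cK)$ — exactly as in Lemma \ref{Xiao}, only now I will use the full length of $J$.

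For $p\geq 1$ and $|l|\in(0,\hat C_1\nu]=(0,\tilde c\nu]$, take $x\in[y,\,y+\tilde c\nu-|l|]$, so that $[x,x+l]\subset J$. Then
\begin{equation*}
u(x+l)-u(x)=\int_x^{x+l}\partial_x u(z)\,dz\leq -\tfrac34\tilde C\,|l|\,\nu^{-1},
\end{equation*}
so $|u(x+l)-u(x)|^{p}\geq C_p|l|^{p}\nu^{-p}$ on an $x$-set of length $\tilde c\nu-|l|\geq\tfrac12\tilde c\nu$. Integrating gives
\begin{equation*}
\int_{\mathbb{S}^{1}}|u(x+l)-u(x)|^{p}\,dx\geq C_p|l|^{p}\nu^{-p}\cdot\tfrac12\tilde c\,\nu = C_p'|l|^{p}\nu^{-(p-1)},
\end{equation*}
pointwise on $Q_2$. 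Averaging under $\langle\langle\cdot\rangle\rangle$ against $\rho(Q_2)\geq\tfrac12 C(1,\sigma_0)$ yields the claimed bound $S_p(l;u)\geq C_p|l|^{p}\nu^{-(p-1)}$.

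For $p\in(0,1)$ I would reuse the Hölder trick from Lemma \ref{Xiao}: writing $f:=[u(\cdot+l)-u(\cdot)]^{+}$, the inequality $(\int f)^{2-p}\leq(\int f^{2})^{1-p}\int f^{p}$ together with $\int f=\tfrac12\int|u(\cdot+l)-u(\cdot)|$, the $p=1$ case just established (giving $\int f\geq C|l|$), and the pointwise bound $f\leq K|l|$ (hence $\int f^{2}\leq K^{2}|l|^{2}$) delivers $\int f^{p}\geq C_p|l|^{(2-p)-2(1-p)}=C_p|l|^{p}$, and averaging in $\langle\langle\cdot\rangle\rangle$ completes the proof. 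The only delicate point, which I expect to be the main obstacle to check carefully, is the quantitative propagation of the shock estimate from a single point $y$ to the full interval $J$ of length $\tilde c\nu$, because it is this widening — rather than an interval of length $|l|/2$ as in the inertial range — that produces the extra factor $\nu^{1-p}$; everything else is book-keeping inherited from Lemma \ref{Xiao}.
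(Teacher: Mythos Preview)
Your approach is essentially the paper's: restrict to the event $Q_2$ from Lemma~\ref{Xiao}, locate the shock point $y$, propagate the gradient bound $\partial_x u\leq -\tfrac34\tilde C\nu^{-1}$ over an interval of length $\sim\nu$ via the second-derivative control, and then harvest the integral over this $\nu$-wide window rather than one of width $|l|/2$. The paper carries out the last step slightly differently---it applies Jensen/H\"older on $[y-\hat C_1\nu,y+\hat C_1\nu]$ to get $\int_I|f|^p\geq |I|^{-(p-1)}\bigl(\int_I|f|\bigr)^p$ and then bounds the $L^1$ integral---whereas your direct pointwise estimate is a bit cleaner; but the mechanism is identical, and the $p\in(0,1)$ case is handled the same way in both.

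One small gap to repair: the inequality ``$\tilde c\nu-|l|\geq\tfrac12\tilde c\nu$'' fails for $|l|\in(\tfrac12\tilde c\nu,\tilde c\nu]=(\tfrac12\hat C_1\nu,\hat C_1\nu]$, so as written you only cover half the dissipation range. The fix is immediate: the bound $|\partial_{xx}u|_\infty\leq cK\nu^{-2}$ propagates the gradient estimate to both sides of $y$, so enlarge $J$ to $[y-\tilde c\nu,\,y+\tilde c\nu]$; then for $|l|\leq\tilde c\nu$ the set of $x$ with $[x,x+l]\subset J$ has length $2\tilde c\nu-|l|\geq\tilde c\nu$, and the rest of your argument goes through verbatim. (The paper implicitly uses this two-sided interval as well.)
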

\begin{proof}
The computations are almost the same as in Lemma \ref{Xiao}. The only
difference is that in the case $p\geq1$, for $|l|\leq \hat{C}_1\nu$, by H\"{o}lder's inequality we get
\begin{align*}
\int_{\mathbb{S}^{1}}|u(x+l)-u(x)|^{p}dx&\geq\int_{y-\hat{C}_1\nu}^{y+\hat{C}_1\nu}|u(x+l)-u(x)|^{p}dx  \\
                                        &\geq(2\hat{C}_1\nu)^{-(p-1)}\Big(\int_{y-\hat{C}_1\nu}^{y+\hat{C}_1\nu}|u(x+l)-u(x)|dx\Big)^{p}\\
                                        &\geq C_p\nu^{-(p-1)}\Big(\int_{y-\hat{C}_1\nu}^{y+\hat{C}_1\nu}\int_{x}^{x+l}[\partial_{x}u^{-}(z)-\partial_{x}u^{+}(z)]dzdx\Big)^{p}\\
                                        &\geq C_p\nu^{-(p-1)}\Big(\int_{y-\hat{C}_1\nu}^{y+\hat{C}_1\nu}C|l|\nu^{-1}dx\Big)^{p}\geq C_p|l|^{p}\nu^{-(p-1)}.
\end{align*}
\end{proof}

Summing up the results of Lemma \ref{Ma}-\ref{Smile} above, we obtain the following theorem.
\begin{theorem}\label{Wy}
For $|l|$ in the inertial range $(\hat{C}_{1}\nu,\hat{C}_2]$ we have
\begin{equation*}
S_p(l; u)\sim|l|^{\min(1,p)},\quad\text{where}~~p>0.
\end{equation*}
While for $|l|$ in the dissipation range $(0,\hat{C}_{1}\nu]$,
\begin{equation*}
S_p(l; u)\sim|l|^{p}\nu^{1-\max(1,p)},\quad\text{where}~~p>0.
\end{equation*}
\end{theorem}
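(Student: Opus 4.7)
The proof plan is essentially bookkeeping: Theorem \ref{Wy} is a direct consolidation of the four preceding lemmas (Lemma \ref{Ma}, the unlabeled lemma refining the upper bound in the inertial range, Lemma \ref{Xiao}, and Lemma \ref{Smile}). My strategy is to split on the two ranges of $|l|$ and, within each range, to split on whether $p\geq 1$ or $p\in(0,1)$, then match the previously established upper and lower bounds to produce the claimed equivalence $\sim$.

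First I would handle the inertial range $|l|\in(\hat{C}_1\nu,\hat{C}_2]$. For $p\geq 1$, the refined upper estimate gives $S_p(l;u)\leq C_p|l|$, while Lemma \ref{Xiao} yields $S_p(l;u)\geq C_p|l|$; combining these gives $S_p(l;u)\sim|l|=|l|^{\min(1,p)}$. For $p\in(0,1)$, both the upper bound in the refined lemma and the lower bound in Lemma \ref{Xiao} give $S_p(l;u)\asymp|l|^p=|l|^{\min(1,p)}$. These two sub-cases together establish the first equivalence.

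Next I would handle the dissipation range $|l|\in(0,\hat{C}_1\nu]$. For $p\geq 1$, Lemma \ref{Ma} gives $S_p(l;u)\leq C_p|l|^p\nu^{-(p-1)}$ and Lemma \ref{Smile} gives the matching lower bound $S_p(l;u)\geq C_p|l|^p\nu^{-(p-1)}$. Here I simply rewrite the exponent of $\nu$ as $-(p-1)=1-p=1-\max(1,p)$, since $\max(1,p)=p$ in this case. For $p\in(0,1)$, both lemmas give $S_p(l;u)\asymp|l|^p$, and I match this with $|l|^p\nu^{1-\max(1,p)}=|l|^p\nu^{0}=|l|^p$ by observing $\max(1,p)=1$. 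Uniformity in $\nu\in(0,\nu_0]$ is inherited from the hypotheses of Lemmas \ref{Xiao} and \ref{Smile}.

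The only ``hard'' step is purely notational, namely verifying that the unified form $|l|^p\nu^{1-\max(1,p)}$ correctly encodes both the $p\geq 1$ regime $|l|^p\nu^{1-p}$ and the $p\in(0,1)$ regime $|l|^p$; a short check on exponents suffices. Since all real analytic work has already been absorbed into Lemmas \ref{Ma}--\ref{Smile}, no additional estimates on moments, Sobolev norms, or the Oleinik--Kruzkov inequality are needed at this stage, and no further use of the L\'evy structure of the noise appears explicitly here.
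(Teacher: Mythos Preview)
Your proposal is correct and matches the paper's approach exactly: the paper presents Theorem \ref{Wy} with the sentence ``Summing up the results of Lemma \ref{Ma}-\ref{Smile} above, we obtain the following theorem'' and gives no further proof, so the entire content is precisely the case-by-case combination of the four preceding lemmas that you describe. Your check that $|l|^{p}\nu^{1-\max(1,p)}$ unifies the two sub-cases in the dissipation range is the only thing one could call a step, and it is straightforward.
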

\begin{remark}
In K41 theory the hydrodynamical dissipative scale is predicted to be $l_d^{K}=\nu^{-\frac{3}{4}}$. For water turbulence the K41 theory predicts that in the inertial range
\begin{equation*}
S_p(l; u):=\mathbb{E}|u(x+l)-u(x)|^{p}\sim|l|^{\frac{p}{3}},\quad|l|\in[\hat{C}_{1}\nu^{\frac{3}{4}},\hat{C}_{2}],
\mathrm{}\end{equation*}
where $u$ is a homogeneous random field.
This is the celebrated $\frac{1}{3}$-law of the K41 theory about the $p$-th moment of the random variable $u(x+l)-u(x)$. It claims that the sizes of
increments $|u(x+l)-u(x)|$ behaves as $|l|^{\frac{1}{3}}$ for $|l|$ in the inertial range.
The $\frac{1}{3}$-law tells us that
\begin{equation*}
\frac{S_p(l; u)^{\frac{1}{p}}}{S_p(l; u)^{\frac{1}{q}}}\sim C_{p,q}\quad\forall p, q>0,
\end{equation*}
even for tiny $|l|$, when $u(x+l)-u(x)$ is a Gaussian random
variable (very small).
\end{remark}
\begin{remark}
In our case, $u(x+l)-u(x)$ certainly is a non-Gaussian random
variable. The structure functions $S_{p}(l; u)$ obey the law in Theorem \ref{Wy} that presents an abnormal scaling.  For stochastic turbulence,
\begin{equation*}
\frac{S_p(l; u)^{\frac{1}{p}}}{S_p(l; u)^{\frac{1}{q}}}\sim C_{p,q}|l|^{\frac{1}{p}-\frac{1}{q}},
\end{equation*}
which is big for small $l$ if $p>q$. This is a typical non-Gaussian behavior.
\end{remark}

\subsection{The energy spectrum for stochastic turbulence}
The second celebrated law for the Kolmogorov theory of turbulence deals with the distribution of the
energy $\langle\langle\frac{1}{2}\int_{\mathbb{S}^{1}}|u|^{2}dx\rangle\rangle$ along the spectrum \cite{AFLV}.
For one-dimensional turbulent flow $u(t,x)$ in stochastic model \eqref{B}, by Parseval's identity,
\begin{equation*}
\langle\langle\frac{1}{2}\int_{\mathbb{S}^{1}}|u|^{2}dx\rangle\rangle=\sum_{k\in\mathbb{Z}^{\ast}}\frac{1}{2}\langle\langle|\widehat{u}_{k}|^{2}\rangle\rangle.
\end{equation*}
So we consider the quantities $\frac{1}{2}\langle\langle|\widehat{u}_{k}|^{2}\rangle\rangle$.
For any $n\in\mathbb{N}^{*}$, define $E_n(u)$ as the averaging of $\frac{1}{2}\langle\langle|\widehat{u}_{k}|^{2}\rangle\rangle$ along the layer $J_{n}=\{k\in\mathbb{Z}^{\ast}: M^{-1}n\leq|k|\leq Mn\}$ around $n$, i.e.,
\begin{equation*}
E_n(u)=\langle\langle e_n(u)\rangle\rangle,\quad e_n(u)=\frac{1}{|J_{n}|}\sum_{k\in J_{n}}\frac{1}{2}|\widehat{u}_{k}|^{2};
\end{equation*}
where $e_n(u)$ is the averaged energy of the $n$-th mode of $u$. The function $n\mapsto E_n(u)$ is called the energy spectrum  for the flow $u$.

Equivalently,
\begin{definition}
For the energy of wave number $n$ corresponding to the solution $u$ of stochastic Burgers equation \eqref{B}, the function $n\mapsto E_n(u)$ satisfying
\begin{equation}\label{En}
E_{n}(u)=\frac{1}{2n(M-M^{-1})}\sum_{M^{-1}n\leq|k|\leq Mn}\frac{1}{2}\langle\langle|\widehat{u}_{k}|^{2}\rangle\rangle
\end{equation}
is the layer-averaged energy spectrum, where $M$ is a positive constant independent of $\nu$.
\end{definition}

The estimate \eqref{PA} of $\langle\langle |\widehat{u}_{k}|^{2}\rangle\rangle$  infers that if $k$ is greater than the critical threshold $\nu^{-1}$, then  it decreases faster than any negative power of $k$, and that this is not valid if $k\ll\nu$.
It follows that for
$n\gg l_{d}$ the energy spectrum decays faster than any negative degree
of $n$ uniformly in $\nu$. But for $n\leq l_{d}$ the behaviour of $E_n(u)$ is quite different.

In what follows, we shall continue to the study the energy spectrum $E_n(u)$ when $n\lesssim\nu^{-1}$.

\begin{theorem}\label{O}
Let $M\geq1$ in \eqref{En} be large enough, for $n^{-1}$ in the inertial range $(\hat{C}_{1}\nu,\hat{C}_2]$ as in Theorem \ref{Wy}, i.e.,
\begin{equation}\label{Enen}
\hat{C}_2^{-1}\leq n<\hat{C}_{1}^{-1}\nu^{-1},
\end{equation}
we have the spectral power law
\begin{equation}\label{Ee}
\hat{C}_{3}n^{-2}\leq E_{n}(u)\leq \hat{C}_4n^{-2},
\end{equation}
which means that $E_{n}(u)\sim n^{-2}$.
\end{theorem}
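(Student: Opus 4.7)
The plan is to read the layer sum $\Sigma_n := \sum_{|k|\in J_n} f_k$ (with $f_k := \langle\langle |\widehat{u}_k|^2\rangle\rangle$) off of the second structure function $S_2(l;u)$. Expanding $u(x+l) - u(x) = \sum_k \widehat{u}_k(e^{2\pi i k l} - 1) e^{2\pi i k x}$ and applying Parseval yields
\begin{equation*}
S_2(l;u) = 4 \sum_{k \in \mathbb{Z}^{*}} f_k \sin^2(\pi k l),
\end{equation*}
and the definition \eqref{En} gives $E_n(u) = \Sigma_n/(4n(M - M^{-1}))$, so it suffices to prove $\Sigma_n \sim 1/n$; both ends of this estimate will be extracted from the inertial-range asymptotic $c_- |l| \leq S_2(l;u) \leq c_+ |l|$ provided by Theorem \ref{Wy} with $p=2$.

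For the upper bound I select $l_0 = 1/(2Mn)$, which by \eqref{Enen} lies in the inertial range after adjusting constants if necessary. For $|k| \in J_n$ we have $\pi k l_0 \in [\pi/(2M^2), \pi/2]$, so $\sin^2(\pi k l_0) \geq c_M := \sin^2(\pi/(2M^2)) > 0$, and discarding every other mode in the Parseval identity yields
\begin{equation*}
4 c_M \Sigma_n \leq S_2(l_0;u) \leq c_+ l_0,
\end{equation*}
hence $\Sigma_n \leq C/n$ and $E_n(u) \leq \hat{C}_4 n^{-2}$.

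The lower bound is the main obstacle. A single evaluation of Parseval at $l \sim 1/n$ is insufficient because naive moment bounds on the low Fourier modes produce contributions of the same order $1/n$ as the target. The remedy is to integrate the lower bound $S_2(l;u) \geq c_- l$ against the indicator $\textbf{1}_{[\alpha/n,\beta/n]}$ of an interval contained in the inertial range, producing the high-pass kernel
\begin{equation*}
I(k) := \int_{\alpha/n}^{\beta/n} \sin^2(\pi k l)\, dl = \frac{\beta-\alpha}{2n} - \frac{\sin(2\pi k\beta/n) - \sin(2\pi k\alpha/n)}{4\pi k},
\end{equation*}
which by Taylor expansion satisfies $I(k) = O(k^2/n^3)$ for $|k| \ll n$, is of order $1/n$ for $|k|\sim n$, and obeys $|I(k)| \leq 1/n + C/|k|$ for $|k|\gg n$. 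Splitting $\sum_k f_k I(k)$ into low, middle and high modes, the low-mode piece is controlled by $n^{-3}\sum_k f_k k^2 \lesssim n^{-3}\nu^{-1} \leq \hat{C}_1 n^{-2}$ via Theorem \ref{Ai} and the inertial-range constraint $\nu n \leq \hat{C}_1^{-1}$, while the high-mode piece is $\lesssim (Mn)^{-2}\sum_k f_k k^2 \leq \hat{C}_1/(M^2 n^2)$ by Markov's inequality applied to the same energy-dissipation bound. Choosing $M$ first (large enough to bound $\sin^2$ on $J_n$ and to annihilate the high-mode tail) and then, if necessary, contracting $\hat{C}_1, \hat{C}_2$ so that both side contributions are strictly less than half of $c_-(\beta^2-\alpha^2)/(4n^2)$, the middle piece absorbs at least $c'/n^2$, giving $\Sigma_n \geq c'/n$ and hence $E_n(u) \geq \hat{C}_3 n^{-2}$. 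The delicate point is precisely this hierarchical choice of constants $M, \hat{C}_1, \hat{C}_2$: Theorem \ref{Wy} fixes the intrinsic $c_\pm$, and the inertial range may have to be tightened for the high-pass argument to close.
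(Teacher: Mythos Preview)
Your upper bound is fine, though the paper obtains it more directly: integration by parts gives $|\widehat{u}_k|\leq(2\pi|k|)^{-1}|\partial_xu|_1$, and the Oleinik--Kruzkov estimate (Theorem~\ref{At}) then yields the \emph{pointwise} bound $f_k=\langle\langle|\widehat u_k|^2\rangle\rangle\leq Ck^{-2}$ uniformly in $\nu$, from which $E_n(u)\leq\hat C_4 n^{-2}$ is immediate.

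Your lower bound, however, has a genuine gap. You control the low-mode contribution by $n^{-3}\sum_k k^2 f_k\lesssim n^{-3}\nu^{-1}$ via Theorem~\ref{Ai}, and then claim $n^{-3}\nu^{-1}\leq \hat C_1 n^{-2}$ ``via the inertial-range constraint $\nu n\leq\hat C_1^{-1}$''. But that constraint says $\nu^{-1}>\hat C_1 n$, i.e.\ it gives a \emph{lower} bound on $\nu^{-1}/n$, not an upper bound; the inequality you need goes the wrong way. Concretely, for $n$ near the bottom of the inertial range (order~$1$) and $\nu$ small, $n^{-3}\nu^{-1}\sim\nu^{-1}$ swamps the target $n^{-2}\sim1$. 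The same defect hits your high-mode tail: $(Mn)^{-2}\sum_k k^2 f_k\lesssim M^{-2}n^{-2}\nu^{-1}$ is not $\ll n^{-2}$ for fixed $M$ when $\nu\to0$. Neither problem can be repaired by shrinking $\hat C_1,\hat C_2$, because the obstruction is the $\nu$-dependence of the $H^1$ bound, not the size of the inertial window.

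The remedy---and what the paper actually does---is precisely the pointwise bound $f_k\leq Ck^{-2}$ (uniform in $\nu$) mentioned above. With it one has $\sum_{|k|\leq M^{-1}n}|k|^2 f_k\leq CM^{-1}n$ and $\sum_{|k|\geq Mn}f_k\leq CM^{-1}n^{-1}$, and then a \emph{single} evaluation of Parseval at $l=1/n$, combined with $|k|^2\geq(n/\pi)^2\sin^2(\pi k/n)$ and the lower bound $S_2(1/n)\geq c_-/n$ from Lemma~\ref{Xiao}, gives $\sum_{|k|\in J_n}|k|^2 f_k\geq (C''-C'M^{-1})n$ for $M$ large. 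So your assertion that ``a single evaluation of Parseval at $l\sim1/n$ is insufficient'' is not correct: it works once you replace the aggregate $H^1$ bound by the sharper mode-by-mode decay $f_k\leq Ck^{-2}$.
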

\begin{proof}
Since $\widehat{u}_{k}(t)=\int_{\mathbb{S}^{1}}u(t,x)e^{-2i\pi kx}dx$, after integration by parts, we know that $|\widehat{u}_{k}|\leq\frac{1}{2\pi k}|\partial_xu|_{1}$, $k\in\mathbb{N}^{*}$. By Theorem \ref{At} and the meaning of averaging $\langle\langle\cdot\rangle\rangle$, we obtain that $\langle\langle|\widehat{u}_{k}|^{2}\rangle\rangle\leq Ck^{-2}$, which results in the upper estimate of \eqref{Ee}. Now we check the lower estimate. As $\langle\langle|\widehat{u}_k|^{2}\rangle\rangle\leq Ck^{-2}$,
\begin{equation}\label{Mk}
\sum_{|k|\leq M^{-1}n}|k|^{2}\langle\langle|\widehat{u}_k|^{2}\rangle\rangle\leq CM^{-1}n,
\end{equation}
and
\begin{equation}\label{nMk}
\sum_{|k|\geq Mn}\langle\langle|\widehat{u}_k|^{2}\rangle\rangle\leq CM^{-1}n^{-1}.
\end{equation}
Let's pose $U=\sum_{|k|\leq Mn}|k|^{2}\langle\langle|\widehat{u}_k|^{2}\rangle\rangle$. Using the fact $|\sin(x)|\leq|x|$,
\begin{align}\nonumber
U&\geq\frac{n^{2}}{\pi^{2}}\sum_{|k|\leq Mn}\sin^{2}(\frac{k\pi}{n})\langle\langle|\widehat{u}_k|^{2}\rangle\rangle \\ \label{U}
 &=\frac{n^{2}}{\pi^{2}}\Big(\sum_{k\in\mathbb{Z}^{\ast}}\sin^{2}(\frac{k\pi}{n})\langle\langle|\widehat{u}_k|^{2}\rangle\rangle-\sum_{|k|> Mn}\sin^{2}(\frac{k\pi}{n})\langle\langle|\widehat{u}_k|^{2}\rangle\rangle\Big).
\end{align}
Note that by Parseval's identity, we get
\begin{equation*}
\|u(\cdot+y)-u(\cdot)\|^{2}=4\sum_{k\in\mathbb{Z}^{\ast}}\sin^{2}(k\pi y)|\widehat{u}_k|^{2}.
\end{equation*}
So \eqref{nMk} and \eqref{U} imply that
\begin{equation}\label{US}
U\geq\frac{n^{2}}{\pi^{2}}\Big(\frac{1}{4}\langle\langle\|u(\cdot+\frac{1}{n})-u(\cdot)\|^{2}\rangle\rangle-\sum_{|k|>Mn}\langle\langle|\widehat{u}_k|^{2}\rangle\rangle\Big)\geq Cn^{2}S_{2}(\frac{1}{n})-C'M^{-1}n.
\end{equation}
Because $n$ satisfies \eqref{Enen}, by \eqref{US} and \eqref{Cool} ($p=2$, $l=\frac{1}{k}$), we obtain
\begin{equation}\label{UC}
U\geq C''n^{2}n^{-1}-C'M^{-1}n=(C''-C'M^{-1})n.
\end{equation}
Arguably,
\begin{equation*}
E_{n}(u)\geq \frac{1}{4M^{3}n^{3}}\sum_{M^{-1}n\leq|k|\leq Mn}|k|^{2}\langle\langle|\widehat{u}_k|^{2}\rangle\rangle.
\end{equation*}
Hence, utilizing \eqref{Mk} and \eqref{UC}, we specify that
\begin{align*}
E_{n}(u)&\geq\frac{1}{4M^{3}n^{3}}\Big(U-\sum_{|k|\leq M^{-1}n}|k|^{2}\langle\langle|\widehat{u}_k|^{2}\rangle\rangle \Big)\\
     &\geq\frac{C''-C'M^{-1}-CM^{-1}}{4M^{3}n^{2}}>\hat{C}_{3}n^{-2},\quad\text{with}~~\hat{C}_{3}>0,
\end{align*}
if we choose $M\gg1$. Therefore, the first inequality of \eqref{Ee} holds.
\end{proof}
Ignoring the multiplicative constants before the powers of $\nu$ in the Fourier presentation, we write the segment $[\hat{C}_{2}^{-1},\hat{C}_{1}^{-1}\nu^{-1})$
as $[\nu^{0},\nu^{-1})$ and call it the inertial zone. Then, Theorem \ref{O} says that in the inertial zone the energy spectrum $E_n(u)$ behaves like $n^{-2}$. Likewise, we call the segment $[\nu^{-1}, +\infty)$ the dissipative zone, and \eqref{PA} results in the fact that in this zone, the energy spectrum $E_n(u)$ decreases faster than any negative power of $n$.
\begin{remark}
For the water turbulence the K41 theory proposed that $E_n(u)$ obeys the useful Kolmogorov-Obukhov law \cite{BB13}:
\begin{equation*}
E_{n}\sim|n|^{-\frac{5}{3}},
\end{equation*}
if $n$ in the inertial range. For fluid dynamics of turbulence in the Burgers equation, the physicist Jan Burgers in 1940 predicted that $E_{n}\sim|n|^{-2}$ for $|n|>C\nu^{-1}$, i.e., exactly the spectral power law above.
\end{remark}
\begin{remark}
 If random force $L(t,\omega,x)=\sum_{k=\pm1,\pm2,\cdot\cdot\cdot}\beta_{k}L_{k}(t,\omega)e_{k}(x)$ is such that $\beta_{k}\equiv\beta_{-k}$, i.e., $L(t,\omega,x)$ is homogeneous in $x$, then the velocity field $u(t,x)$ is stationary
in $t$ and homogeneous in $x$. What's more,
\begin{equation*}
\mathbb{E}e_{n}(u(t))\sim n^{-2}\quad\text{for all}~~t.
\end{equation*}
It is a perfect match for K41 of turbulence.
\end{remark}

\section{ Statistical quantities in the inviscid limit}\label{SI}
A remarkable fact is that, when $\nu\rightarrow0$, a solution $u^{\nu}$ of stochastic system \eqref{B} converges to an inviscid limit of turbulence :
\begin{equation*}
u^{\nu}(t,\cdot)\rightarrow u^{0}(t,\cdot)\quad\text{in}~~L^{p}(\mathbb{S}^{1}),~a.s.,
\end{equation*}
for each $p>0$. This result of the limiting dynamics is due to the Lax-Oleinik formula. The limit $u^0(t,x)$ is called an
inviscid solution, or an entropy solution of \eqref{B} with $\nu=0$. The limiting function $u^0(t,x)$ of the inviscid equation
is not even continuous. But still the structure function and energy spectrum are well defined
for $u^0(t,x)$, and they inherit all qualitative and quantitative properties proved previously for $u^{\nu}$ uniformly with small enough $\nu>0$ in last section.

Because formally there is no dissipation in the
inviscid Burgers equation, it does not have a dissipation range.  To make this rigorous, we define the non-empty and non-intersecting intervals $(0,\hat{C}_2]$ and $(\hat{C}_2,1]$ in $x$-presentation, which now correspond to the inertial range and the energy range, respectively. The
constant $\hat{C}_2$ is the same as Theorem \ref{Wy}. We denote  the structure function $S_{p}(l; u^{0})$ and energy spectrum $E_{n}(u^0)$ for $u^0(t,x)$ in the same way as the previously considered quantities $S_{p}(l; u^{\nu})$ and $E_{n}(u^\nu)$ for $u^{\nu}(t,x)$. On the basis of the power and utility for dominated convergence theorem, the following estimates remain valid in the inviscid limit.
\begin{theorem}
If $|l|\in(0,\hat{C}_2]$, then
\begin{description}
\item[1)] $E_{n}(u^0)\sim n^{-2}$ for all $n\in\mathbb{N}^{\ast}$,
\item[2)] and we gain the law
 \begin{eqnarray*}
S_{p}(l; u^{0})\sim\left\{\begin{array}{l}
C_p|l|,\quad\text{if}~~p\geq1;\\
C_p|l|^{p},~~~\text{if}~~p\in(0,1).
\end{array}
\right.
\end{eqnarray*}
\end{description}
\end{theorem}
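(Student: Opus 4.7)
The plan is to transfer the estimates for $u^\nu$ obtained in Theorems \ref{Wy} and \ref{O} to the inviscid limit $u^0$ by combining the pointwise (in $t,\omega$) convergence $u^\nu(t,\cdot)\to u^0(t,\cdot)$ in $L^p(\mathbb{S}^1)$ with uniform-in-$\nu$ moment bounds supplied by Theorem \ref{At}, and then invoking the dominated convergence theorem inside the averaging bracket $\langle\langle\cdot\rangle\rangle$.

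First I would handle part 2) (the structure function). For every $l$ and every $(t,\omega)$, the map $u\mapsto\int_{\mathbb{S}^1}|u(x+l)-u(x)|^p\,dx$ is continuous on $L^p(\mathbb{S}^1)$, so the Lax--Oleinik convergence yields
\begin{equation*}
\int_{\mathbb{S}^1}|u^\nu(t,x+l)-u^\nu(t,x)|^p\,dx \;\xrightarrow[\nu\to 0]{}\; \int_{\mathbb{S}^1}|u^0(t,x+l)-u^0(t,x)|^p\,dx, \quad a.s.
\end{equation*}
To justify passage to the limit through $\langle\langle\cdot\rangle\rangle$, I would dominate the integrand by $2^p|u^\nu|_\infty^p$, which by Theorem \ref{At} has moments bounded uniformly in $\nu\in(0,1]$ on $t\in[\epsilon,T]$, giving uniform integrability on $\Omega_T$. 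Hence $S_p(l;u^\nu)\to S_p(l;u^0)$ for every fixed $l\in(0,\hat{C}_2]$. The upper bound $S_p(l;u^0)\le C_p|l|^{\min(1,p)}$ then follows from the corresponding estimate in Theorem \ref{Wy}. For the lower bound, note that whenever $\nu<\hat{C}_1^{-1}|l|$ the point $l$ lies in the $\nu$-dependent inertial range $(\hat{C}_1\nu,\hat{C}_2]$ of Lemma \ref{Xiao}, so $S_p(l;u^\nu)\ge C_p|l|^{\min(1,p)}$ with constants \emph{independent of} $\nu$. Sending $\nu\to 0$ preserves the inequality and gives $S_p(l;u^0)\ge C_p|l|^{\min(1,p)}$.

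Next I would turn to part 1). The Fourier coefficient $\widehat{u}_k^\nu(t)=\int_{\mathbb{S}^1}u^\nu(t,x)e^{-2\pi i kx}\,dx$ is a bounded linear functional on $L^1(\mathbb{S}^1)$, so $L^p$-convergence (any $p\ge 1$) yields $\widehat{u}_k^\nu(t)\to\widehat{u}_k^0(t)$ a.s., and consequently $e_n(u^\nu)\to e_n(u^0)$ a.s.\ for each fixed $n$. Again Theorem \ref{At} (combined with $|\widehat{u}_k^\nu|\le|\partial_x u^\nu|_1/(2\pi k)$ and $|\widehat{u}_k^\nu|\le|u^\nu|_\infty$) supplies a uniform-in-$\nu$ integrable majorant, so $E_n(u^\nu)\to E_n(u^0)$ by dominated convergence. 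For the upper bound the integration-by-parts estimate $\langle\langle|\widehat{u}_k^\nu|^2\rangle\rangle\le Ck^{-2}$ used in Theorem \ref{O} is uniform in $\nu$ and passes to the limit. For the lower bound, fix $n\ge\hat{C}_2^{-1}$ and choose any $\nu$ small enough that $n<\hat{C}_1^{-1}\nu^{-1}$; then Theorem \ref{O} gives $E_n(u^\nu)\ge\hat{C}_3 n^{-2}$ with $\hat{C}_3$ independent of $\nu$, and the limit preserves this inequality. For the finitely many values $n<\hat{C}_2^{-1}$ the claim $E_n(u^0)\sim n^{-2}$ is just the assertion that $E_n(u^0)$ is bounded between two positive constants, which follows from the energy balance and moment bounds since the total energy $\langle\langle\|u^0\|^2\rangle\rangle$ is both bounded above and bounded below away from zero.

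The main obstacle is ensuring that the constants appearing in the lower bounds of Lemma \ref{Xiao} and Theorem \ref{O} are genuinely independent of $\nu$, so that they survive the limit $\nu\to 0$; a careful reading of those proofs confirms this, since the thresholds $\hat{C}_1,\hat{C}_2,\nu_0$ depend only on the random force, not on $\nu$. A secondary technical point is justifying the interchange of $\lim_{\nu\to 0}$ with the double average in time and probability, for which the uniform moments in Theorem \ref{At} and estimate \eqref{uk} provide the required equi-integrability.
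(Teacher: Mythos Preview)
Your proposal is correct and follows exactly the approach the paper itself indicates: the paper does not give a detailed proof of this theorem but simply states that ``on the basis of the power and utility for dominated convergence theorem, the following estimates remain valid in the inviscid limit,'' relying on the Lax--Oleinik convergence $u^\nu\to u^0$ and the $\nu$-uniform bounds established earlier. Your write-up in fact supplies considerably more detail than the paper does---in particular, the explicit identification of the dominating functions via Theorem~\ref{At}, the observation that for each fixed $l$ (resp.\ $n$) one eventually lands in the inertial range of Lemma~\ref{Xiao} (resp.\ Theorem~\ref{O}) once $\nu$ is small, and the separate handling of the finitely many $n<\hat{C}_2^{-1}$---so there is nothing to correct.
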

This theorem describes stochastic turbulence in the inviscid limit. It should be noted that for $u^0$ the dissipation scale $l_d$ equals to $\infty$, and the inertial range in Fourier becomes the whole interval $[0,\infty)$. Now the energy law for $E_n(u^0)$ holds for all $n\in\mathbb{N}^{\ast}$ and the inertial range $(0,\hat{C}_2]$ in $x$.

\section{Conclusions and challenges}\label{five}
In stochastic Burgers equation \eqref{B} perturbed by L\'evy space-time white noise with the periodic boundary condition and small viscosity, we rigorously derived the moment estimates for Sobolev norms of solution, and proved statistical properties including structure function as well as energy spectrum. We focused on one-dimensional turbulence effected by cylindrical L\'evy process with bounded jumps, and illustrated the practical usage and applicability of the flow fluids for small but positive $\nu$, i.e., when $0<\nu\ll1$. Moreover, we obtained the qualitative and quantitative properties in the inviscid limit as the kinematic viscosity $\nu$ tends towards zero.

Let us comment here briefly on possible extensions of those results. When the noise involves large jumps, by using interlacing techniques, we expect scientific computation and further analysis for stochastic turbulence. But it does not escape us that if some jumps are too big, then non-Gaussian fluctuations cause sudden, intermittent and unpredictable dynamical behaviors of turbulent flows. In addition, it would be interesting to extend the present additive noise considerations to the case of multiplicative L\'evy noise. The jumps multiply the velocity, so we first must consider stochastic turbulence in the framework of Marcus type stochastic partial differential equations modeling jumps in the velocity gradient. It is complicated to identify probability  density  functions. Look at the problem dialectically, the Marcus properties give us a chain rule to take care of the large jumps. We plan to show those sophisticated contents, simulations and experiments in the future papers.
\medskip

{\it Acknowledgments.} \noindent  This work is initiated by an inspiring discussion with Sergei Kuksin. The authors are happy to thank Haitao Xu for fruitful discussions on stochastic differential equations driven by L\'evy motions, and non-equilibrium statistical
mechanics. The authors gratefully acknowledge support from the NSFC grant 12001213.

\bibliography{mybibfile}

\end{document}